\def\Nb{\mathop{\mathbb{N}_{}}\nolimits}
\def\graphsn{\mathop{\mathcal{G}_{[n]}}\nolimits}
\def\graphsU{\mathop{\mathcal{G}_{\mathcal{U}}}\nolimits}
\def\graphsN{\mathop{\mathcal{G}_{\mathbb{N}}}\nolimits}
\def\part{\mathop{\mbox{Part}}\nolimits}
\def\ell{\mathop{[l]}\nolimits}
\def\equalinlaw{\mathop{=_{\mathcal{D}}}\nolimits}
\def\P{\mathop{\mathcal{P}_{}}\nolimits}
\def\0{\mathop{\mathbf{0}_{}}\nolimits}
\def\1{\mathop{\mathbf{1}_{}}\nolimits}
\def\deg{\mathop{\text{deg}}\nolimits}
\def\Rn{\mathop{\mathbf{S}_n}\nolimits}
\def\Rninv{\mathop{\mathbf{S}_n^{-1}}\nolimits}
\def\graphsm{\mathop{\mathcal{G}_{[m]}}\nolimits}
\def\Rmn{\mathop{\mathbf{S}_{m,n}}\nolimits}
\def\Rmninv{\mathop{\mathbf{S}_{m,n}^{-1}}\nolimits}
\newtheorem{thm}{Theorem}[section]
\newtheorem{lemma}[thm]{Lemma}
\newtheorem{defn}[thm]{Definition}
\newtheorem{example}[thm]{Example}
\newtheorem{observation}[thm]{Observation}
\newtheorem{rmk}[thm]{Remark}
\begin{document}
\begin{frontmatter}
\title{A framework for statistical network modeling}
\runtitle{Statistical network modeling}

\begin{aug}
\author{\fnms{Harry} \snm{Crane}\ead[label=e1]{hcrane@stat.rutgers.edu} and \fnms{Walter} \snm{Dempsey\ead[label=e1]{wdempsey@uchicago.edu}}}
\thankstext{t1}{Harry Crane is partially supported by NSF CAREER grant DMS-1554092.}
\affiliation{Rutgers University and University of Michigan}
\address{Department
of Statistics \& Biostatistics \\ 
110 Frelinghuysen Road\\
Piscataway, NJ 08854, USA}
\address{Department of Statistics\\
1085 S.\ University Avenue\\
Ann Arbor, MI 48109, USA}

\runauthor{Harry Crane \& Walter Dempsey}
\end{aug}

\begin{abstract}
Basic principles of statistical inference are commonly violated in network data analysis.
Under the current approach, it is often impossible to identify a model that accommodates known empirical behaviors, possesses crucial inferential properties, and accurately 
models the data generating process. In the absence of one or more of these properties, sensible inference from network data cannot be assured.

Our proposed framework decomposes every network model into a {\em (relatively) exchangeable data generating process} and a {\em sampling mechanism} that relates observed data to the population network. 
This framework, which encompasses all models in current use as well as many new models, such as edge exchangeable and relationally exchangeable models, that lie outside the existing paradigm, offers a sound context within which to develop theory and methods for network analysis.
\end{abstract}

\begin{keyword}
\kwd{network data}
\kwd{sparse network}
\kwd{scale-free network}
\kwd{edge exchangeable network}
\kwd{relational exchangeability}
\kwd{relative exchangeability}
\kwd{data generating process}
\kwd{network sampling}
\end{keyword}
\end{frontmatter}

\section{Introduction}\label{section:introduction}

A {\em statistical model}, traditionally defined \cite{CoxHinkley1974,Lehmann1983,McCullagh2002}, is a family of probability distributions $\mathcal{M}$ on the sample space $\mathcal{S}$ of all maps from the set of {\em statistical units} $\mathcal{U}$ into the {\em response space} $\mathcal{R}$.
Some other authors discuss statistical modeling from various perspectives \cite{DrtonSullivant2007,Helland2006,McCullagh2002}, but none of these prior accounts directly addresses the specific challenges of network modeling, namely the effects of sampling on network data and its subsequent impact on inference.
In fact, these issues are hardly even mentioned in the statistical literature on networks, a notable exception being the recent analysis of sampling consistency for the exponential random graph model \cite{RinaldoShalizi2013}.
Below we address both logical and practical concerns of statistical inference as well as clarify how specific attributes of network modeling fit within the usual statistical paradigm.
We build up our framework from first principles, which themselves lead to insights that might otherwise pass without notice.

We gear the discussion toward both theorists and practitioners.
For the theorist, we offer a logical framework within which to develop sound theory and methods.
For the practitioner, we present guiding principles for handling network datasets and point out subtle pitfalls of some existing approaches.

\section{Summary of main discussion}\label{section:summary}

We start with the basic principle that a reliable model should be unfazed by arbitrary decisions such as assignment of labels and sampling design:
\begin{itemize}
\item[(A)] ``The sense of the model and the meaning of the parameter[...]may not be affected by accidental or capricious choices such as sample size or experimental design''  \cite[p.\ 1237]{McCullagh2002}.
\end{itemize}

Respectively, concerns over labeling and sampling relate to the logical properties of label equivariance and consistency under subsampling; see Section \ref{section:model properties} for a thorough discussion.
It is notable that almost every network model in popular use fails to satisfy at least one of these properties or otherwise does not accurately model the data generating process: the preferential attachment \cite{BA1999} and superstar \cite{BhamidiSteele2014} models are not label equivariant; the exponential random graph model generally fails to be consistent under subsampling; and the Erd\H{o}s--R\'enyi model does not adequately capture basic network properties.
Some more recent models, such as random graphs built from exchangeable random measures \cite{BorgsChayes2016,CaronFox2014,VeitchRoy2016} and edge exchangeable models \cite{CraneDempsey2016e2}, do address these basic concerns, but nevertheless cannot yet address all relevant aspects of network data.
Despite their misgivings, these models serve a clear practical purpose, and the discussion aims to bridge the divide between standard practice and logical principle.

\subsection{Main considerations}

Invariance principles, and in particular exchangeability, have played an important role in the foundations of statistical and inductive inference since at least the time of de Finetti.
On the one hand, symmetry properties are expedient, and in many ways necessary, for making inferences tractable.
On the other hand, they must be balanced so as to respect important asymmetries in the data.

This balance is especially prominent in modeling network datasets, which often exhibit heterogeneities that are of primary scientific interest.
To be clear, specifying an exchangeable network model poses no technical difficulty---exchangeable random graphs are completely characterized by the Aldous--Hoover theory for partially exchangeable random arrays \cite{Aldous1981,Hoover1979}---but trouble arises when attempting to reconcile exchangeability with the empirical property of sparsity, which reflects the common observation that almost all real world networks have a small number of edges relative to the number of vertices.
Though well known in the probability and combinatorics literature, the following observation has recently been highlighted in the machine learning literature \cite{OrbanzRoy2014}.

\begin{observation}\label{obs:dense or empty}
An exchangeable network is both sparse and nonempty with probability 0.
\end{observation}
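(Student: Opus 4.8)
The plan is to reduce the statement to the structure theorem for exchangeable random graphs and then exploit the fact that exchangeability forces every edge to carry the same marginal probability. Write the network as its adjacency array $(X_{ij})_{i,j\in\mathbb{N}}$ and let $E_n=\sum_{1\le i<j\le n}X_{ij}$ count the edges among the first $n$ vertices; here ``sparse'' means the edge density vanishes, $E_n/\binom{n}{2}\to 0$ with $E_n\to\infty$, and ``nonempty'' means $X_{ij}=1$ for some pair $\{i,j\}$. First I would invoke the Aldous--Hoover representation to write $X_{ij}=\mathbf{1}\{\eta_{ij}\le W(\xi_i,\xi_j)\}$, where $(\xi_i)$ and $(\eta_{ij})$ are i.i.d.\ uniform on $[0,1]$ and $W\colon[0,1]^2\to[0,1]$ is a (possibly random) symmetric measurable graphon, so that conditionally on $W$ and $(\xi_i)$ the edges are independent Bernoulli variables with success probability $W(\xi_i,\xi_j)$.

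The controlling quantity is the random edge density $D:=\iint_{[0,1]^2}W(x,y)\,dx\,dy$. Conditioning on $W$ reduces the problem to the ergodic (extreme) case in which $W$ is deterministic; the general conclusion follows because an event with conditional probability $0$ given $W$ also has unconditional probability $0$ after integrating over the law of $W$. The core step is therefore a clean dichotomy according to whether $D$ vanishes.

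Next I would establish the two halves of the dichotomy. If $D>0$, then the empirical edge density converges, $E_n/\binom{n}{2}\to D$ almost surely (the standard law of large numbers for edge densities of dissociated exchangeable arrays), whence $E_n\sim D\binom{n}{2}\asymp n^2$ and the network is dense rather than sparse, so $\{\text{sparse}\}$ is null on $\{D>0\}$. If instead $D=0$, then since $W\ge 0$ we have $W=0$ almost everywhere, so $\mathbb{P}(X_{ij}=1)=\mathbb{E}[W(\xi_i,\xi_j)]=0$ for every pair; by countable subadditivity over all unordered pairs $\{i,j\}\subset\mathbb{N}$ we get $\mathbb{P}(\exists\,i<j:X_{ij}=1)=0$, i.e.\ the network is empty almost surely, and $\{\text{nonempty}\}$ is null on $\{D=0\}$. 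In either case $\{\text{sparse}\}\cap\{\text{nonempty}\}$ is null, and integrating over the mixing law of $W$ yields the claim.

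The main obstacle I anticipate is the passage from ``each edge has probability $0$'' to ``the network is globally empty'': the per-edge statement only records that the common marginal edge probability equals the constant $D$, but concluding emptiness requires the countable-additivity step across all pairs, which is precisely where exchangeability (constancy of the marginal across pairs) does the real work. A secondary subtlety is to fix the reading of sparsity as $E_n/\binom{n}{2}\to 0$; under this reading the dichotomy is exhaustive, since the law of large numbers pins the density to the deterministic constant $D$ in the ergodic case and leaves no room for an intermediate, nonempty, sub-quadratic regime.
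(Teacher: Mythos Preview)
Your argument is correct and complete, but the route differs from the paper's. The paper does not invoke Aldous--Hoover at all: it works directly with the edge-density sequence $X_n=\epsilon(G_{|[n]})$, notes that exchangeability makes $(X_n)_{n\geq1}$ a reverse martingale with almost-sure limit $X_\infty$, and then argues the dichotomy at the level of $X_\infty$ itself---on $\{X_\infty>0\}$ the graph is dense, while $X_\infty=0$ together with bounded convergence and the constancy of $\mathbb{E}[\mathbf{1}\{ij\in E\}]$ under exchangeability forces the common edge marginal to vanish, hence emptiness. Your approach instead passes through the representation theorem, conditions on the graphon $W$ to reduce to the dissociated case, and splits on whether $D=\iint W$ is positive. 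The paper's argument is lighter (reverse-martingale convergence is the only tool) and avoids quoting a deep structure theorem for what is ultimately a soft fact; your argument costs more machinery but makes the ergodic decomposition explicit and cleanly isolates the step you flagged as the ``main obstacle''---the countable-additivity passage from per-pair probability zero to global emptiness---which the paper's proof handles more implicitly. Either way the crux is the same: exchangeability pins every pair to a common marginal, so zero limiting density leaves no room for even a single edge.
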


Even with recent progress in statistical network analysis, including \cite{BickelChenLevina2011,GaoLuZhou2015,ZhangZhou2015,ZhaoLevinaZhu2011,ZhaoLevinaZhuconsistency2011} and many more, there remains no settled approach to address the following basic questions:
\begin{itemize}
	\item[(I)] How can valid inferences be drawn from network models that are not exchangeable and/or sampling consistent in the traditional sense?
	\item[(II)] How can empirical network properties, such as sparsity, power law degree distribution, transitive closure, etc., be modeled in accordance with Principle (A)?
\end{itemize}

With these questions in mind, we lay out a network modeling framework that incorporates existing principles of statistical modeling and brings forward several new ideas relevant to network data.
Whereas some existing models, such as graphon models and edge exchangeable models, fit easily within our framework, others, such as preferential attachment and exponential random graph models, do not.
Our goal is to nevertheless explain the salient features of these models with respect to the framework, and in the process clarify the potential benefits and drawbacks of each approach.
We highlight four major consequences here; see Section \ref{section:consequences} for a detailed discussion of each.

\begin{enumerate}
	\item[(M1)] {\em Statistical units}.  Recognizing that previous authors, with little exception, either implicitly or explicitly treat vertices as units, we call attention to the possibility that other entities, such as edges, paths, triangles, etc., may act as the units in certain network datasets.  
This observation bears directly on the sampling mechanism of the two-stage modeling framework in (M2) below and, in some cases, leads to a more natural way to model certain network datasets which addresses Question (II).
	\item[(M2)] {\em Network modeling framework}. We address Question (I), in part, by showing that every label equivariant statistical network model can be specified by
\begin{itemize}
	\item[(i)] a {\em (relatively) exchangeable network generating model}, which models formation of the population network, and
	\item[(ii)] a {\em sampling mechanism}, by which the observed network is obtained by sampling from the population network.
\end{itemize}
In discussing (i), we separate label equivariant models into the two cases of exchangeable and relatively exchangeable network models.
In light of (M1), we discuss notions of exchangeability with respect to relabeling of the units, which may be vertices, edges, paths, or some other component of the network.
When edges are units, for example, {\em edge exchangeable} network models may be appropriate for addressing Question (II) in (M3).
Edge exchangeable models were introduced and developed in \cite{CraneDempsey2016e2} for modeling interaction networks, for which the edges are the fundamental statistical units.
When paths or other entities are the units, the more general notion of {\em relational exchangeability} \cite{CraneDempsey2016relational} may be appropriate. 
Relative exchangeability, on the other hand, is appropriate when modeling data from an inhomogeneous population of vertices, as in community detection.
The framework in (i)-(ii) is made precise in Theorem \ref{thm:main}.
	\item[(M3)] {\em Modeling empirical properties}. Observation \ref{obs:dense or empty} lays bare a severe limitation when network data are regarded as a graph with labeled vertices.   Combining (M1) and (M2), we discuss certain classes of models that reproduce key empirical features while exhibiting tractable invariance properties.
	\item[(M4)] {\em Relative exchangeability}.  Though our proof of Theorem \ref{thm:main} shows that every network can be modeled by an exchangeable network generating process, the exchangeability assumption may be inappropriate for network data from inhomogeneous populations.
	 Relative exchangeability allows us to step outside the boundaries of exchangeable models without sacrificing inferential validity.
	We characterize a large class of relatively exchangeable network models in Theorem \ref{thm:rel exch}.
	\end{enumerate}

The above four points highlight the most crucial considerations if sensible and valid inferences are desired.
As a practical matter, the sampling mechanisms in (M2)(ii) provide the necessary link between population network and observed network data.  
The crux of Theorem \ref{thm:main} is that the much neglected element of sampling is, in fact, implicit in every network model.  Whether chosen finite sample models reflect a realistic data generating process and sampling mechanism ought not be ignored during model selection.


\section{Network modeling}\label{section:network models}

The relevance of Questions (I) and (II) and the associated challenges to statistical inference grow out of the empirical findings of the late 1990s and early 2000s, when several groups recognized common structural features in network datasets from the World Wide Web \cite{BA1999,FFF1999,Kumar1999}, telecommunications systems \cite{Abello1998}, and biological processes \cite{JeongMason2001}.
These observed networks are {\em sparse}, that is, have a small number of edges relative to the number of vertices, and in many cases exhibit a {\em power law degree distribution}, that is, the proportion of vertices with degree $k\geq1$ is asymptotically proportional to $k^{-\gamma}$ for some $\gamma>1$ for all large $k\geq1$.

Barab\'asi \& Albert \cite{BA1999}, cf.\ \cite{Price1965,Simon1955}, propose a preferential attachment mechanism for generating networks with certain power law behavior.
While some, including Barab\'asi \& Albert \cite{BA1999} and D'Souza, et al \cite{DSouza2007}, credit preferential attachment dynamics for the emergence of scale-free network structure, others \cite{AchlioptasClauset2005,AhmedNevilleKompella2010,LeeKimJeong2006,StumpfWiufMay2005,WillingerAldersonDoyle2009}
point out that common sampling methods can produce network data with vastly different structure than the population network.
All of these latter observations, which highlight the pitfalls of ignoring the effect of sampling on observed network structure, appear outside of the statistics literature.

\subsection{Network data}\label{section:statistical network models}

Intuitively, network data correspond to a graphical structure, as in Figure \ref{fig:labeled}.
Formally, we define {\em data} as a function from the set of {\em statistical units} $\mathcal{U}$ into the {\em response space} $\mathcal{R}$ and we define a {\em statistical model} as a set of probability distributions on the space of functions $\mathcal{U}\rightarrow\mathcal{R}$.
Two seemingly elementary questions of statistical modeling, then, are
\begin{itemize}
	\item What are the units?
	\item What is the response space?
\end{itemize}	

We find no discussion of these questions in the networks literature aside from a comment by Kolaczyk \cite[p.\ 54]{Kolaczykbook}, who mostly focuses on the case in which vertices are the units but raises some other possibilities in the course of his discussion.
This convention is taken for granted in other places, where without exception {\em network data} is regarded as synonymous with a {\em graph} $G=(S,E)$ with vertex set $S$ and edges $E\subseteq S\times S$.\footnote{In some cases, network data include multiple edges or edges involving more than two vertices.
For the sake of clarity, we omit these extensions and treat all edges as undirected, that is, $(i,j)\in E$ implies $(j,i)\in E$ so that we may write $ij\in E$.}
As we discuss, the seemingly innocuous act of identifying the vertices as units causes much of the confusion in network modeling.
In fact, the vertices, edges, or various other components of a network may be natural candidates for the units depending on the application at hand.
More than just an technical observation, the identification of the units has implications for how the data ought to be represented and ultimately modeled.

We imagine a population $V$ and we write $\mathcal{U}$ as the set of units.
We may reasonably identify the vertices as units, and write $\mathcal{U}=V$, if the data are obtained by sampling $S\subset V$ and observing a network of binary relations among sampled individuals.
Alternatively, many network datasets arise by observing interactions between individuals in the population $V$, as in networks formed by professional collaborations \cite{BA1999,Rossi2015} and email communications \cite{KlimtYang2004}.
Network data then consist of the sampled interactions (as edges) along with whatever vertices are involved in the observed interactions.
The edges may be treated as units in this case.
We label network data according to which entities comprise the units, as in Figures \ref{fig:labeled}(a) and \ref{fig:labeled}(d).
Still other entities, such as paths, may act as the natural statistical unit, as in the mapping of the Internet via path sampling.

The framework below is meant to treat network data of a generic form, but to aid clarity we focus mostly on the case in which vertices or edges are units, with some discussion of paths as units peppered throughout.
For all intents and purposes, the intuitive visualization of network data as a vertex or edge labeled graph, as in Figures \ref{fig:labeled}(a) and \ref{fig:labeled}(d), is sufficient to follow along.
The following definition gives meaning to the more technical aspects of our discussion and allows us to speak of network data generically, with possibly different interpretations given to the symbols as appropriate.

\begin{defn}[Network data]\label{defn:data}
Let $V$ be a population.
{\em Network data} for a population $V$ is a function $G:\mathcal{U}\to\mathcal{R}$, where $\mathcal{U}$ is the set of units and $\mathcal{R}$ is the response space.
The interpretation of $\mathcal{U}$, $\mathcal{R}$, and $G:\mathcal{U}\to\mathcal{R}$ depends on whether vertices or edges act as units:
\begin{itemize}
	\item {\em vertices}: $\mathcal{U}=V$ and $\mathcal{R}=2^{\mathcal{U}}$, the power set of all subsets of $\mathcal{U}$, so that, for each $u\in\mathcal{U}$, $G(u)\subseteq\mathcal{U}$ is the set of all $u'\in\mathcal{U}$ for which there is an edge between $u$ and $u'$.
This case is often represented by a graph $(\mathcal{U},E)$ with edge set $E\subseteq\mathcal{U}\times\mathcal{U}$ defined by
\[uu'\in E\quad\text{if and only if}\quad u'\in G(u);\]
 see Figure \ref{fig:labeled}(a).
	\item {\em edges}: $\mathcal{U}$ corresponds to interactions and $\mathcal{R}=(V\times V)/\mathcal{S}_2$ consists of unordered pairs of elements in the population so that $G:\mathcal{U}\to (V\times V)/\mathcal{S}_2$ identifies which vertices $v,v'\in V$ are involved in the interaction corresponding to unit $u\in\mathcal{U}$.\footnote{$\mathcal{S}_2$ is the set of permutations of 2 elements so that $(V\times V)/\mathcal{S}_2$ is the quotient space of ordered sets $(v,v')$ with order ignored.}  This case also corresponds to a graph, but with labeled edges instead of vertices; see Figure \ref{fig:labeled}(d).
	\item {\em paths}: $\mathcal{U}$ corresponds to paths between vertices and $\mathcal{R}=\bigcup_{n\geq1}V^n$ consists of finite tuples of vertices so that $G:\mathcal{U}\to\mathcal{R}$ identifies the sequence of vertices $(v_1,\ldots,v_n)\in V^n$ involved in each path.
\end{itemize}
\end{defn}

\begin{figure}[!t]
\includegraphics[width = 0.5\textwidth]{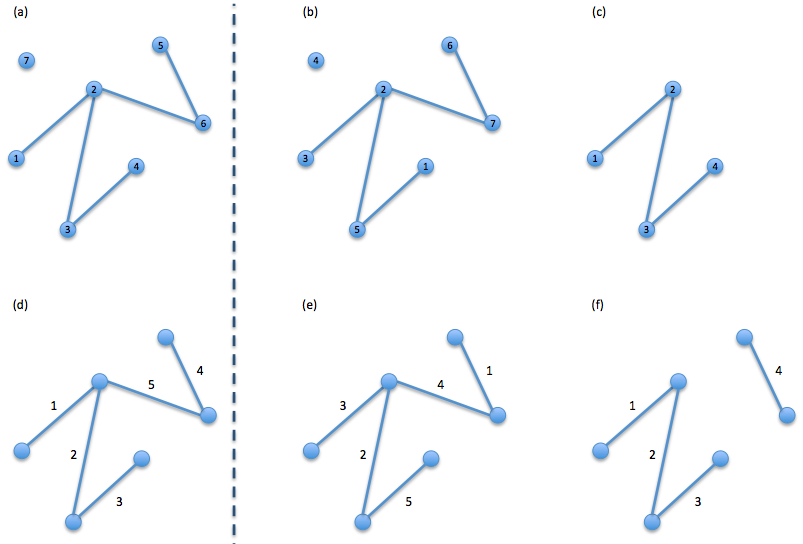}
\caption{Two views of network data: Panels (a)-(c) show network data with labeled vertices, as is appropriate when vertices are units.  Panels (d)-(f) show network data with labeled edges, as is appropriate when edges are units.  Panel (b) shows the network from (a) relabeled according to permutation $(135674)(2)$.  Panel (e) shows the network from (d) relabeled according to permutation $(1354)(2)$.  Panel (c), respectively (f), shows the restriction of the network in (a), respectively (d), to the units labeled in $\{1,2,3,4\}$.}
\label{fig:labeled}
\end{figure}

Unless otherwise noted, we assume a countable collection of units $\mathcal{U}=\Nb=\{1,2,\ldots\}$.
We always write $G$ to denote network data, with the understanding that $G$ is labeled according to which of its components are the units.
Below we write $\mathcal{G}_S$ to denote the set of graphs with units labeled in $S\subset\Nb$, which are mostly interpreted as vertex or edge labeled graphs in the context below.
We then write $\mathcal{P}(\mathcal{G}_S)$ to denote the set of probability distributions on $\mathcal{G}_S$ equipped with its Borel $\sigma$-field.

\begin{defn}[Network model]\label{defn:model}
A {\em network model} for population $V$ and units $\mathcal{U}$ is a subset $\mathcal{M}\subseteq\mathcal{P}(\graphsU)$.
A {\em parameterized network model} is a {\em parameter set} $\Theta$ together with a map $P:\Theta\rightarrow\P(\graphsU)$ that associates each $\theta\in\Theta$ with a probability distribution $P_{\theta}$ on $\graphsU$.

We call $\mathcal{M}\subseteq\mathcal{P}(\graphsU)$ a {\em data generating model} to emphasize that $\mathcal{M}$ comprises a family of data generating processes for the population network.
For $S\subset\mathcal{U}$ with $|S|<\infty$, we call $\mathcal{M}_S\subseteq\P(\mathcal{G}_S)$ a {\em finite sample model}.
\end{defn}

\subsection{Sampling}

Many of the fundamental issues in network modeling are caused by a logical disconnect between the finite sample models $\{\mathcal{M}_n\}_{n\geq1}$ and a presumed data generating model $\mathcal{M}\subseteq\P(\graphsN)$.
Theorem \ref{thm:main} connects the two by a sampling procedure.

For $n\geq m\geq1$, we define the {\em canonical sampling operations} $\Rn:\graphsN\rightarrow\graphsn$ and $\Rmn:\graphsn\rightarrow\graphsm$ by $G\mapsto \Rn G:=G_{|[n]}$ and $G\mapsto\Rmn G:=G_{|[m]}$, respectively.
Every measure $\mu$ on $\graphsN$ induces a measure on $\graphsn$ in the usual way, $\mu\mapsto\mu\Rninv=:\Rn\mu$.

In practice, network models are often specified through their {\em finite sample models} $\{\mathcal{M}_n\}_{n\geq1}$, where each $\mathcal{M}_n\subseteq\P(\graphsn)$ indicates the model for data from a sample of $n=1,2,\ldots$ units.
In the parametric setting, we specify a sequence of finite sample models $\{\mathcal{M}_n\}_{n\geq1}$ through a collection of maps $P^{(n)}:\Theta\rightarrow\P(\graphsn)$, so that $\mathcal{M}_n=P^{(n)}\Theta$ for each $n=1,2,\ldots$.

A data generating model $\mathcal{M}\subseteq\P(\graphsN)$ is {\em finitely specified} when it is defined through a collection $\{\mathcal{M}_n\}_{n\geq1}$ of consistent finite sample models.
Any $\mathcal{M}\subseteq\P(\graphsN)$ can be finitely specified by taking 
\begin{equation}\label{eq:finitely specified}
\mathcal{M}_n=\Rn\mathcal{M}:=\{\mu\Rninv:\,\mu\in\mathcal{M}\} \quad\text{for each }n\geq1,\end{equation}
but these induced finite sample models may not accurately reflect the relationship between population network and observed subnetwork data.
In particular, the canonical sampling maps $\{\Rn\}_{n\geq1}$ may not properly model the sampling mechanism \cite{AchlioptasClauset2005,LeeKimJeong2006,WillingerAldersonDoyle2009}.

Under the canonical sampling operation, $\{P^{(n)}\Theta\}_{n\geq1}$ corresponds to a data generating model $P:\Theta\rightarrow\P(\graphsN)$ only if the families $P^{(n)}\Theta$ are {\em consistent under subsampling}, that is, $P^{(n)}\Theta=\Rn P\Theta$ for every $n\geq1$.
Quoting Shalizi \& Rinaldo \cite[p.\ 510]{RinaldoShalizi2013},
\begin{quote}
``When this form of consistency fails, then the parameter estimates obtained from a sub-network may not provide reliable estimates of, or may not even be relatable to, the parameters of the whole network, rendering the task of statistical inference based on a sub-network ill-posed.'' 
\end{quote}
Nevertheless, many widely used network models are not consistent with respect to the canonical sampling operation.

\begin{example}\label{ex:ERGM}
The {\em exponential random graph model} (ERGM) \cite{HollandLeinhardt1981} is an exponential family of distributions for vertex labeled networks of a given finite size $n=1,2,\ldots$.
Let $T=(T_1,\ldots,T_k)$ be a collection of network statistics and $\theta=(\theta_1,\ldots,\theta_k)\in\Theta$ be parameters.
The ERGM $P^{(n)}:\Theta\to\mathcal{P}(\graphsn)$ with natural parameter $\theta$ and canonical sufficient statistic $T$ on $\graphsn$ assigns probabilities
\begin{equation}\label{eq:ERGM}
P^{(n)}_{\theta}(G)\propto\exp\left\{\sum_{i=1}^k \theta_iT_i(G)\right\},\quad G\in\graphsn.
\end{equation}

The finite sample models determined by \eqref{eq:ERGM} are consistent under subsampling only under the restrictive condition that the sufficient statistics have {\em separable increments} \cite{RinaldoShalizi2013}.
For example, let $\beta=(\beta_1,\ldots,\beta_n)$ and construct a random graph $G=([n],E)$ so that each edge is present independently with probability
\[\mathbb{P}\{ij\in E\}= \frac{e^{\beta_i + \beta_j}}{1+e^{\beta_i + \beta_j}},\quad 1\leq i<j\leq n.\]
The Erd\H{o}s--R\'enyi distribution corresponds to the case $\beta_i\equiv \beta$ for all $i=1,\ldots,n$.
\end{example}

\begin{example}\label{ex:Bickel}
The following is a special case of the models proposed in \cite{BickelChen2009PNAS}.
For $\Theta=[0,1]$ and $n\geq1$, we define $P^{(n)}:[0,1]\to\mathcal{P}(\graphsn)$ as the model that associates each $\theta\in[0,1]$ to the Erd\H{o}s--R\'enyi distribution with parameter $\theta/n$, that is, the distribution of a vertex labeled graph with $n$ vertices for which each edge is present independently with probability $\theta/n$.
For $m\leq n$, $P^{(m)}_{\theta}$ and $P^{(n)}_{\theta}$ are not consistent with respect to the canonical subsampling map $\Rmn$.
Without further information relating samples of different sizes, inferences cannot extend beyond the observed network.
We discuss this further in Sections \ref{section:sampling} and \ref{section:infer-univ}.
\end{example}

The canonical sampling mechanism is unrealistic for most applications.
From this perspective, the prevalence of network models that are not consistent with respect to the canonical sampling maps, as in Examples \ref{ex:ERGM} and \ref{ex:Bickel}, may not pose a fundamental inferential problem, as long as the inconsistency between finite sample models of different size can be explained.

Inductive and/or predictive inferences based on $\{\mathcal{M}_n\}_{n\geq1}$ remain possible if $\mathcal{M}_n$ and $\mathcal{M}_m$ can be related through some subsampling mechanism.
To treat this,  we let $\Sigma_n:\graphsN\rightarrow\graphsn$ and $\Sigma_{m,n}:\graphsn\rightarrow\graphsm$, $m\leq n$, be (possibly random) sampling maps.
Given $\mu\in\P(\graphsN)$, we write $\Sigma_n\mu\in\P(\graphsn)$ to denote the distribution of $\Sigma_n G$ for $G\sim\mu$, and likewise for $\mu_n\in\P(\graphsn)$ and $\Sigma_{m,n}\mu_n\in\P(\graphsm)$.
For a data generating model $\mathcal{M}\subseteq\P(\graphsN)$ and sampling mechanism $\Sigma_n:\graphsN\to\graphsn$, we write $\Sigma_n\mathcal{M}=\{\Sigma_n\mu:\,\mu\in\mathcal{M}\}$.

\begin{example}[Snowball sampling]\label{example:snowball}
In snowball sampling, we start with a single vertex $v^*$ and sample outwardly by first choosing all vertices connected to $v^*$, then choosing all vertices connected to the vertices chosen in the last step, and so on until a prescribed number of vertices is sampled.\footnote{There are variations of snowball sampling where at each step at most $k\geq1$ neighbors are sampled.  The details are not crucial here.}
Since the sampled network is highly dependent on the initially chosen vertex $v^*$, the characteristics of the sampled network may not be representative of the population network.
Lee, Kim \& Jeong \cite{LeeKimJeong2006} analyze the discrepancy between various sample and population statistics under snowball sampling from the preferential attachment model.

Snowball sampling determines a sampling mechanism $\{\Sigma_n\}_{n\geq1}$ such that $\Sigma_n\mu$ and $\Sigma_m\mu$ may not be consistent under subsampling.
Nevertheless snowball sampling, including the closely related respondent driven sampling \cite{Heckathorn1997,Wejnert2010}, is widely used in practice, and so it is fitting to establish a context for network modeling that allows for snowball sampling and other common sampling mechanisms.
\end{example}


\section{Modeling framework}\label{section:framework}

The above discussion suggests that every {\em bona fide} network model should account for both the data generating process and the sampling mechanism.
The accounting can be implicit, as when the model is specified by a family of finite sample models $\{\mathcal{M}_n\}_{n\geq1}$, or explicit, as when the data generating process and sampling scheme are modeled directly.
Implicit modeling may be justified if the law governing observed data is well understood, but in practice this is quite rare.

\begin{defn}[Statistical network model]\label{defn:statistical network model}
A {\em statistical network model} is a pair $(\mathcal{M},\{\Sigma_n\}_{n\geq1})$, where $\mathcal{M}\subseteq\P(\graphsN)$ models the data generating process and each $\Sigma_n:\graphsN\to\graphsn$ is a (possibly random) sampling mechanism such that $\Sigma_n\mathcal{M}$ models data for a sample of size $n\geq1$.
A {\em parameterized model} is a triple $(\Theta,Q,\{\Sigma_n\}_{n\geq1})$, from which the data generating process is modeled by $Q:\Theta\to\P(\graphsN)$.
\end{defn}

\begin{rmk}
For clarity, we focus on the parametric case below.
\end{rmk}

Our main theorem establishes that no generality is lost by modeling network data as in Definition \ref{defn:statistical network model}.
We state the theorem now and explain its significance afterward. 

\begin{thm}\label{thm:main}
Let $\Theta$ be a parameter space that satisfies Condition \ref{condition:uncountable} and let $\{P^{(n)}\}_{n\geq1}$ define label equivariant finite sample models $P^{(n)}:\Theta\to\mathcal{P}(\graphsn)$.
Then there exists an identifiable, (relatively) exchangeable data generating model $Q:\Theta\to\mathcal{P}(\graphsN)$ and (possibly random) sampling mechanisms $\{\Sigma_n\}_{n\geq1}$ such that $G_n\sim P^{(n)}_{\theta}$ satisfies $G_n\equalinlaw\Sigma_n G$ for $G\sim Q_{\theta}$, where $\equalinlaw$ denotes {\em equality in law}.
\end{thm}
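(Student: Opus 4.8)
The plan is to absorb all of the cross-sample inconsistency of the family $\{P^{(n)}\}_{n\ge1}$ into the sampling mechanisms $\{\Sigma_n\}_{n\ge1}$, leaving $Q_\theta$ free to be any convenient exchangeable law that merely encodes $\theta$. The key observation is that $\Sigma_n$ is permitted to be an arbitrary randomized kernel $\graphsN\to\graphsn$ that inspects the whole population network, yet it may not depend on $\theta$; hence the only thing that must be transmitted from the data generating process to the sampling stage is the parameter itself. So I would build $Q_\theta$ so that $\theta$ is almost surely recoverable from a single draw $G\sim Q_\theta$ -- which simultaneously yields identifiability -- and then let $\Sigma_n$ first recover $\theta$ and then re-simulate a draw from $P^{(n)}_\theta$.

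Concretely, first I would use Condition \ref{condition:uncountable} to produce a measurable injection $\iota:\Theta\to(0,1)$ admitting a measurable left inverse on its image $\iota(\Theta)$; for a standard Borel parameter space this is a Borel isomorphism onto a Borel subset of $(0,1)$. Next I would take $Q_\theta$ to be the exchangeable random graph in which edges appear independently with probability $\iota(\theta)$. This is manifestly invariant under relabeling of the units, and being ergodic its directing (de Finetti) parameter is recoverable: writing
\[ \hat p(G)=\limsup_{N\to\infty}\frac{2}{N(N-1)}\sum_{1\le i<j\le N}\mathbf{1}[ij\in G], \]
the strong law gives $\hat p(G)=\iota(\theta)$ for $Q_\theta$-almost every $G$. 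Defining $\hat\theta(G)=\iota^{-1}(\hat p(G))$ when $\hat p(G)\in\iota(\Theta)$ and $\hat\theta(G)=\theta_0$ otherwise yields a measurable map $\hat\theta:\graphsN\to\Theta$ with $\hat\theta(G)=\theta$ holding $Q_\theta$-a.s., so $\theta\mapsto Q_\theta$ is injective and $Q$ is identifiable.

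With this in hand I would define the sampling mechanism by $\Sigma_n(G,\cdot)=P^{(n)}_{\hat\theta(G)}(\cdot)$; because $\theta\mapsto P^{(n)}_\theta$ is a measurable map into $\P(\graphsn)$ and $\hat\theta$ is measurable, each $\Sigma_n$ is a bona fide randomized kernel $\graphsN\to\graphsn$ that does not depend on $\theta$. For $G\sim Q_\theta$ we then have $\hat\theta(G)=\theta$ almost surely, whence $\Sigma_n G\equalinlaw P^{(n)}_\theta$, which is exactly $G_n\equalinlaw\Sigma_n G$. Since $Q_\theta$ was chosen exchangeable, this settles the exchangeable case promised in (M4); the relatively exchangeable case follows by the same recipe with $Q_\theta$ replaced by a relatively exchangeable law that encodes $\theta$ identifiably.

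The main obstacle is conceptual rather than computational: an exchangeable $Q_\theta$ satisfies the rigid projective-consistency constraints of the Aldous--Hoover theory, so its canonical finite restrictions can never reproduce an arbitrary, mutually inconsistent family $\{P^{(n)}_\theta\}$ -- this is precisely the phenomenon behind Examples \ref{ex:ERGM} and \ref{ex:Bickel}. The construction sidesteps this by routing no structural information through the restriction maps at all and instead transmitting only the scalar code $\iota(\theta)$, letting the randomized $\Sigma_n$ regenerate the finite sample from scratch. The technical crux is therefore twofold: guaranteeing that $\theta$ is consistently estimable from one realization of the infinite network (handled by ergodicity and the strong law) and checking that the resimulation step assembles into a genuine measurable Markov kernel (handled by measurability of $P^{(n)}$ together with Condition \ref{condition:uncountable}). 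One should note that the resulting $\Sigma_n$ is deliberately artificial -- it reconstructs rather than subsamples -- which is harmless for an existence statement but underscores that the theorem asserts only that \emph{some} exchangeable-plus-sampling decomposition exists, not that the natural sampling mechanism takes this form.
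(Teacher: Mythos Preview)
Your argument is correct and shares its starting point with the paper's proof: both take $Q_\theta$ to be the Erd\H{o}s--R\'enyi law with success probability $t(\theta)$ for an injection $t:\Theta\to(0,1)$ furnished by Condition~\ref{condition:uncountable}, which makes $Q$ identifiable and exchangeable. The divergence is in how $\Sigma_n$ is built. You recover $\theta$ from $G$ via the almost-sure edge density and then \emph{resimulate} a fresh draw from $P^{(n)}_{\hat\theta(G)}$, discarding $G$ entirely after decoding. The paper instead invokes a universality/ultrahomogeneity lemma for the Erd\H{o}s--R\'enyi graph (Lemma~\ref{lemma:Rado}) and constructs $\Sigma_n$ by sequentially locating vertices $s_1<s_2<\cdots<s_n$ inside $G$ so that the induced subgraph $G_{|\{s_1,\ldots,s_n\}}$ has law $P^{(n)}_\theta$; thus the output of $\Sigma_n$ is a genuine induced subgraph of the population network rather than an independent regeneration. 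Your route is shorter and avoids the Borel--Cantelli extension argument altogether; the paper's route buys a sampling map that literally selects units from $G$, which better matches the intuitive reading of ``sampling mechanism'' even though both constructions are admittedly artificial. You are also more explicit than the paper about the step of recovering $\theta$ from $G$ before invoking $P^{(n)}_\theta$, which is needed for $\Sigma_n$ to be $\theta$-free; the paper leaves this implicit. One small caveat: your appeal to a measurable left inverse of $\iota$ and to measurability of $\theta\mapsto P^{(n)}_\theta$ goes slightly beyond what Condition~\ref{condition:uncountable} states verbatim, though the paper's own proof operates at the same level of informality on this point.
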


Though our proof of Theorem~\ref{thm:main} shows that $Q$ can always be chosen to be exchangeable, the theorem allows the data generating model to be relatively exchangeable.
The appropriateness of an exchangeable or relatively exchangeable data generating model depends on context.  
A relatively exchangeable model is most appropriate when the parameter space incorporates inhomogeneity, as in models for community detection.  When the population is homogeneous, however, an exchangeable data generating model often yields the best interpretation.

\subsection{Model properties and inference}\label{section:model properties}

We suffer no loss of generality in assuming an {\em identifiable} data generating model $Q:\Theta\to\mathcal{P}(\graphsN)$, that is, $\theta\neq\theta'$ implies $Q_{\theta}\neq Q_{\theta'}$.

\paragraph{Label equivariance}

Principle (A) says that any statistical model should be robust to arbitrary manipulations of the data, among other things insisting that the parameter's {\em meaning} should not change under relabeling of the data.
This should not be read as a mandate that every element of the model is invariant with respect to relabeling, but rather that the model itself does not depend on the chosen labeling.

\begin{defn}[Label equivariance]\label{defn:label equivariance}
A network model $(\Theta,Q,\{\Sigma_n\}_{n\geq1})$ is {\em label equivariant} if, for every permutation $\sigma:\Nb\rightarrow\Nb$ and every $\theta\in\Theta$, there exists $\theta'\in\Theta$ such that $G\sim Q_{\theta}$ implies $G^{\sigma}\sim Q_{\theta'}$.
In this case, every $\sigma:\Nb\rightarrow\Nb$ defines an action on $\Theta$ by $\sigma\theta=\theta'$ and $\sigma\Theta=\Theta$ for all permutations $\sigma:\Nb\rightarrow\Nb$.
\end{defn}

Label equivariance captures the basic condition that if $\mathcal{M}=(\Theta,Q,\{\Sigma_n\}_{n\geq1})$ models network data $G_n$, then $\mathcal{M}$ remains the model if $G_n$ is relabeled to $G_n^{\sigma}$ for any permutation $\sigma:[n]\to[n]$, where $G_n^{\sigma}$ is the network data after relabeling the units by permutation $\sigma:[n]\to[n]$.  
Figures \ref{fig:labeled}(c) and \ref{fig:labeled}(f) show the action of relabeling for vertex and edge labeled graphs, respectively.

Label equivariance should not be confused with exchangeability: whereas exchangeability is a distributional property, equivariance is a model property.
A probability distribution $P_{\theta}$ on $\mathcal{G}_S$ is {\em exchangeable} if $G\sim P_{\theta}$ implies $G^{\sigma}\equalinlaw G$ for all permutations $\sigma: S\to S$.
Nevertheless, we borrow the terminology and call $(\Theta,Q,\{\Sigma_n\}_{n\geq1})$ an {\em exchangeable model} if every $\sigma:\Nb\to\Nb$ acts trivially on $\Theta$, that is, $\sigma\theta=\theta$ for all $\theta\in\Theta$.
With this understanding, we observe that any exchangeable model is label equivariant, but not every label equivariant model is exchangeable; take the stochastic blockmodel (Example \ref{example:SBM}) for instance.

\paragraph{Inference problems}

Additional model properties are reasonable when we consider the following two inference problems.  We discuss each in turn.
\begin{itemize}
	\item {\em Universal parameters} for the population network are estimated based on a sampled subnetwork.
A common example includes estimating the power law exponent \cite{BA1999,FFF1999}.
	\item {\em Latent structure} of sampled units is inferred based on structural properties of the observed network.
Community detection \cite{ZhaoLevinaZhuconsistency2011} is a primary example, as is missing link estimation \cite{ClausetMooreNewman2008,Kossinets2006}.
\end{itemize}

\subsubsection{Inferring universal parameters}\label{section:universal}

When interested in universal parameters, that is, properties of the population network, the parameter space $\Theta$ in $(\Theta,Q,\{\Sigma_n\}_{n\geq1})$ must be preserved under the operations of labeling and sampling.
In addition to identifiability and label equivariance of the data generating model $Q:\Theta\to\P(\graphsN)$, the action of the sampling mechanism must respect the furnishings of $\Theta$, as reflected in the property of complete identifiability.

\paragraph{Complete identifiability}
For every $n\in\Nb$ and a (possibly random) sampling map $\Sigma_n:\graphsN\rightarrow\graphsn$, we define an equivalence relation $\sim_{\Sigma_n}$ on $\Theta$ by $\theta\sim_{\Sigma_n}\theta'$ if and only if $G\sim P_{\theta}$ and $G'\sim P_{\theta'}$  implies $\Sigma_nG\equalinlaw\Sigma_nG'$.
With $\Theta_n:=\Theta/\sim_{\Sigma_n}$ as the quotient space, we write $[\theta]_{\Sigma_n}\in\Theta_n$ to denote the equivalence class of $\theta$ under $\sim_{\Sigma_n}$.

For each $n\in\Nb$, let us define $Q^{(n)}:\Theta\rightarrow\P(\graphsn)$ as the model governing $\Sigma_nG$, where $G$ is modeled by $Q:\Theta\rightarrow\P(\graphsN)$.
By the above equivalence relation $\sim_{\Sigma_n}$, these finite sample models need not be identifiable since $\theta\sim_{\Sigma_n}\theta'$ implies $Q^{(n)}_{\theta}=Q^{(n)}_{\theta'}$.
Thus, observed network data for sampled units $[n]\subset\Nb$ is only valid for inference of the equivalence classes of $\Theta_n$.
To emphasize this, we may write $Q^{(n)}:\Theta_n\rightarrow\P(\graphsn)$.

\begin{defn}[Complete identifiability]\label{defn:complete identifiability}
A  network model $(\Theta,Q,\{\Sigma_n\})_{n\geq1})$ is {\em completely identifiable} if $\Theta/\sim_{\Sigma_n}\cong\Theta$ for all $n\in\Nb$.
\end{defn}

Many network models are completely identifiable, such as the Erd\H{o}s--R\'enyi \cite{ErdosRenyi1959,ErdosRenyi1960} and preferential attachment models\cite{BA1999,ChungLubook}, while others are not, such as the stochastic blockmodel \cite{HollandLaskeyLeinhardt1983}.
Lacking complete identifiability is not a pathology, as such models typically incorporate structural inhomogeneities and are often geared toward inferring latent structure.

\subsubsection{Inferring latent structure}\label{section:latent}

The above induced actions of relabeling and sampling on $\Theta$ are nontrivial in models for latent structural properties.
Consider the case in which $\Theta$ decomposes as $\Theta=\Phi\times\Psi$ such that every permutation $\sigma:\Nb\to\Nb$ acts by $\sigma(\phi,\psi)=(\phi,\sigma\psi)$.
Specifically, we assume that $\Phi$ does not depend on the labeling and $\Psi$ consists of labeled objects so that $\sigma\psi\neq\psi$ for some $\psi\in\Psi$ and $\sigma:\Nb\to\Nb$.
See Appendix \ref{appendix:ultra} for a more precise definition of the generic term {\em labeled object}. 
Examples \ref{example:SBM} and \ref{example:covariates} illustrate typical cases.

From network data $G_n\in\graphsn$ and a model $(\Theta,Q,\{\Sigma_n\}_{n\geq1})$, we can hope to infer $(\phi,\psi|_{[n]})$, where $\psi|_{[n]}$ is the restriction of $\psi$ to a combinatorial object labeled in $[n]$, only if the model behaves well with respect to the map $(\phi,\psi)\mapsto(\phi,\psi|_{[n]})$.
For such inference label equivariance gives way to the stronger notion of relative exchangeability, which insures that inference for $\psi|_{[n]}$ depends on $G$ only through the observed data $G_n$.

\paragraph{Relative exchangeability}
In experimental design, the {\em lack of interference} assumption assumes that a change in treatments assigned to units $u'\neq u$ does not affect the response for unit $u$; see Cox \cite[Section 2.4]{Cox1958}.
We adapt this notion to inference for latent structure by assuming that the distribution of observed data is unaffected by alterations to unseen parts of the network.

\begin{defn}[Relative exchangeability]\label{defn:relative exchangeability}
Let $\Theta=\Phi\times\Psi$ be as above.  
A network model $(\Theta,Q,\{\Sigma_n\}_{n\geq1})$ is {\em exchangeable relative to $\Psi$} if, for all $\theta=(\phi,\psi)\in\Theta$ and permutations $\sigma:\Nb\to\Nb$ such that $(\phi,\psi|_{[n]})=(\phi,(\sigma\psi)|_{[n]})$, $G\sim Q_{\theta}$ satisfies $\Sigma_nG^{\sigma}\equalinlaw\Sigma_nG'$ for $G'\sim Q_{\sigma\theta}$.
\end{defn}

Note carefully what Definition \ref{defn:relative exchangeability} says about the finite sample models imposed by $(\Theta,Q,\{\Sigma_n\}_{n\geq1})$.
Each sampling mechanism $\Sigma_n$ determines an equivalence relation $\sim_{\Sigma_n}$ on $\Psi$ by  $\psi\sim_{\Sigma_n}\psi'$ if and only if $G\sim Q_{\phi,\psi}$ and $G'\sim Q_{\phi,\psi'}$ implies $\Sigma_nG\equalinlaw\Sigma_n G'$ for all  $\phi\in\Phi$; see Section \ref{section:universal} above.
We define $\Psi_n=\Psi/\sim_{\Sigma_n}$ as the associated quotient space.
By the combinatorial structure of $\Psi$, the ordinary restriction $\psi\mapsto\psi|_{[n]}$ also determines an equivalence relation $\sim_n$ on $\Psi$ by $\psi\sim_n\psi'$ if and only if $\psi|_{[n]}=\psi'|_{[n]}$.
Relative exchangeability holds for $(\Theta,Q,\{\Sigma_n\}_{n\geq1})$ if, for any permutation $\sigma:\Nb\to\Nb$ and $\psi\in\Psi$, $\psi\sim_n\sigma\psi$ implies $\psi\sim_{\Sigma_n}\sigma\psi$.
In this way, the law of $\Sigma_nG$ for $G\sim Q_{\phi,\psi}$ depends on $\psi$ only through $\psi|_{[n]}$.

The preceding paragraph is more general than is necessary for most applications.
We streamline the discussion by specializing to the case $\Sigma_n=\Rn$ for every $n\geq1$.
Under this simplification, we get a more palatable interpretation of relative exchangeability as a combination of label equivariance and lack of interference for network data.
In particular, if $G$ is a population network from a relatively exchangeable model, then network data $G_n$ for a sample $[n]\subset\Nb$ is invariant with respect to permutations $\sigma:[n]\to[n]$ that act as the identity on $\Psi/\sim_{\Rn}$, regardless of how the action of $\sigma$ extends outside of $[n]$.
We also note that relatively exchangeable data generating models bequeath label equivariance to their finite sample models.
We characterize relatively exchangeable network models in Theorem \ref{thm:rel exch}.


\begin{example}[Stochastic blockmodel]\label{example:SBM}
The stochastic blockmodel \cite{HollandLaskeyLeinhardt1983} is a statistical network model $(\Theta,Q,\{\Rn\}_{n\geq1})$ whose parameter space includes a partition of vertices.
Here the vertices are labeled and, for this example, we specialize to the case $\Theta=[0,1]\times[0,1]\times\mathcal{P}_{\Nb}$, where $\mathcal{P}_{\Nb}$ is the set of partitions of $\Nb$.
In the notation above, we have $\Theta=\Phi\times\Psi$ with $\Phi=[0,1]\times[0,1]$ and $\Psi=\mathcal{P}_{\Nb}$.

The data generating model $Q:\Theta\rightarrow\P(\graphsN)$ assigns each $(p,q)\in\Phi$ and $B\in\Psi$ to the distribution of a random vertex labeled graph $G=(V,E)$ for which each edge is present or absent independently with probability 
\begin{equation}\label{eq:SBM-eq}
\mathbb{P}\{ij\in E\}=\left\{\begin{array}{cc} p,& i\approx_B j,\\
q,& \text{otherwise,}
\end{array}\right.\end{equation}
where $i\approx_B j$ indicates that $i$ and $j$ are in the same cluster of $B$.

Since the status of each edge $ij$ in $G$ depends only on the relationship between $i$ and $j$ in $B$, the marginal distribution of $G_{|\{i,j\}}$ depends only on the restriction $B|_{\{i,j\}}$.
It follows that the distribution of $G_{|[n]}$ depends on $B$ only through $B|_{[n]}$ for every $[n]\subset\Nb$.
Thus, the canonical sampling map $\Rn$ acts on $\Theta$ by $(p,q,B)\mapsto(p,q,B|_{[n]})$, so that $B|_{[n]}=B'|_{[n]}$ implies $G_{|[n]}\equalinlaw G'_{|[n]}$ for $G'$ distributed according to $Q_{\theta}$ with $\theta=(p,q,B')$.
In this case, $\Psi/\sim_{\Rn}$ corresponds to $\mathcal{P}_{[n]}$, the space of partitions of $[n]$.
The model, therefore, is not completely identifiable, but it is label equivariant and relatively exchangeable, and data $G_n\in\graphsn$ for a sample $[n]\subset\Nb$ is sufficient for inference of the subparameter $B|_{[n]}$.
\end{example}

\begin{example}[Models with covariates]\label{example:covariates}
Relative exchangeability is not confined to the inference of latent structure.
It is also appropriate when units come equipped with covariate information.
Let $\Theta=\mathbb{R}^d\times(\mathbb{R}^d)^{\Nb}$ so that $\theta=(\theta,\mathbf{x})$ consists of a parameter $\theta=(\theta_1,\ldots,\theta_d)$ and covariates $\mathbf{x}=(x_i)_{i\in\Nb}$ with $x_i=(x_{i,1},\ldots,x_{i,d})$ for each vertex $i=1,2,\ldots$.
We define a relatively exchangeable model $(\Theta,Q,\{\Rn\}_{n\geq1})$ by assuming $G=(\Nb,E)$ with distribution $Q_{\theta,\mathbf{x}}$ is a vertex labeled graph with each edge present independently with probability
\[\mathbb{P}\{ij\in E\}=\frac{e^{\theta_1(x_{i,1}+x_{j,1})+\cdots+\theta_d(x_{i,d}+x_{j,d})}}{1+e^{\theta_1(x_{i,1}+x_{j,1})+\cdots+\theta_d(x_{i,d}+x_{j,d})}}.\]
Many other network models with covariates can be built out of this example.
The above choice merely demonstrates that such models are relatively exchangeable.
\end{example}

\section{Consequences of the network modeling framework}\label{section:consequences}

Our framework above guides our choice of network model in a way that addresses known empirical properties and ensures valid inferences.

\subsection{Identifying units}

We begin with a plain, albeit crucial, discussion about the statistical units.
Though more often discussed in the context of designed experiments, consideration of units enters into network studies at the sampling stage.
A few examples illustrate that the edges can and should be treated as the units in many applications.

\paragraph{Karate club dataset}

Zachary's karate club dataset \cite{Zachary1977} records social interactions among 34 members in a karate club.
A network is obtained by putting an edge between two individuals if they interacted at least once outside the karate club.
There is no sampling issue in this case: the social interactions among all 34 club members are observed.
The data can be represented as a graph \cite[Fig.\ 1, p.\ 456]{Zachary1977}  and the vertices may be treated as units.

\paragraph{Actors collaboration network}

The actors collaboration network \cite{BA1999} contains information about a sample of movies and the actors involved.
Associated to each movie is the set of actors in its cast, so that each movie $u$ in which both $v,v'\in V$ act corresponds to an interaction between $v$ and $v'$.
The dataset is obtained by sampling movies and, thus, the units correspond to edges.
Since popular actors tend to act in more movies, a random sample of movies results in a sample of actors that is size biased according to degree.

\paragraph{Enron email network}

The Enron email corpus \cite{KlimtYang2004} contains information about email correspondence within the Enron Corporation.
Each email is sent from one employee to a list of recipients, so that the observed network is obtained by sampling these emails and once again the edges should be treated as units.

\paragraph{Internet topology}

The traceroute approach to sampling the router-level Internet samples paths by sending packets out along the Internet backbone that record the sequence of servers visited in sending a message from one point to another.
The observed network is assembled by piecing together the various paths obtained from this procedure, making the paths a natural unit for this dataset.

\subsection{Model specification}\label{section:specification}

A family of finite sample models determines a well defined data generating model only if models for different sample sizes are consistent under subsampling.
An inconsistent family of finite sample models leaves unspoken the assumption that the data generating mechanism varies with the choice of sample size, in direct conflict with Principle (A).

Under the framework of Definition \ref{defn:model}, we suggest to instead model the data generating process and sampling mechanism directly.
Theorem \ref{thm:main} establishes that every family of label equivariant finite sample models $\{P^{(n)}\}_{n\geq1}$ can be alternatively specified by a relatively exchangeable data generating model and a sampling mechanism, which together preserve the structure of $\Theta$.
Any model specified in this way is able to incorporate prior knowledge and beliefs about the key aspects of data generation.

\subsubsection{Sampling mechanisms}\label{section:sampling}
As important as the data generating model is the choice of sampling scheme, among which vertex, edge, and snowball sampling are the most widely discussed.
Lee, Kim \& Jeong \cite{LeeKimJeong2006} have studied the impact of vertex, edge, and snowball sampling schemes on observed network structure.
See \cite{AhmedNevilleKompella2010} for an overview of other sampling procedures.

Vertex and edge sampling proceed by a simple random sample of vertices and edges, respectively.
Snowball sampling is performed by iteratively expanding the neighborhood of a chosen vertex until the desired sample size is achieved. 
We discuss snowball sampling further in Section \ref{section:inference}.

Unlike snowball sampling,  vertex sampling does not resemble any realistic sampling scheme used in practice.
Vertex sampling is also logically indefensible in light of the sparsity hypothesis: a random sample of vertices from a sparse network is empty with high probability unless an appreciable fraction of vertices is sampled.  

Random edge sampling is more tenable than vertex sampling in many cases, as many networks form by a process of interactions among vertices.
In this case, however, edges may be sampled in a size-biased manner according to the strength of ties between vertices.

Theorem \ref{thm:main} establishes that even ill specified models can be recast as a statistical network model with a certain sampling scheme.
We give three such examples below.
We discuss this further in Section \ref{section:power law}.

\paragraph{Bickel \& Chen's approach}

Bickel \& Chen \cite{BickelChen2009PNAS} propose a nonparametric approach based on the Aldous--Hoover theory of exchangeable vertex labeled graphs.
Let $h:[0,1]^2\to[0,1]$ be symmetric so that $G=(\Nb,E)$ satisfies
\begin{equation}\label{eq:Bick}
\mathbb{P}\{ij\in E\mid (U_n)_{n\geq1}\}=h(U_i,U_j),
\end{equation}
conditionally independently for all $i<j$, where $(U_n)_{n\geq1}$ are i.i.d.\ Uniform$[0,1]$ random variables.

Defining $\rho=\int_0^1\int_0^1 h(u,v)du dv$, Bickel \& Chen go on to write $w(u,v)=\rho^{-1}h(u,v)$ and assert \cite[p.\ 21069]{BickelChen2009PNAS} that 
\begin{quote}``it is natural finally to let $\rho$ depend on $n$ but $w(\cdot,\cdot)$ to be fixed.''\end{quote}
In light of Principle (A), we can think of nothing less natural.
Presumably, the approach is ``natural'' on the grounds that choosing, say, $\rho=n$ implies that the expected number of edges grows on the order of $n$ and the sequence of graphs is sparse in the sense of Definition \ref{defn:dense-sparse} below.
But without a way to relate distributions for different sample sizes, $h$ cannot be estimated in a way that is meaningful beyond the sampled network.
In such a case, the hypothetical property of sparsity, which tacitly assumes an infinite population of vertices, is moot.
See Section \ref{section:infer-univ} for a concrete example.

We can rectify this issue with a statistical network model $(\Theta,Q,\{\Sigma_n\}_{n\geq1})$ as follows.
The most obvious description takes $\Theta$ as the set of all symmetric functions $h:[0,1]^2\to[0,1]$ that are unique up to measure preserving transformations of $[0,1]^2$.
We then define $Q_{\theta}$ as in \eqref{eq:Bick} and $\{\Sigma_n\}_{n\geq1}$ as follows.
Let $(\rho_n)_{n\geq1}$ have $\rho_n\geq1$ for all $n\geq1$.
Given $G=(\Nb,E)$ and $n\in\Nb$, $\Sigma_n$ proceeds first by canonical sampling $G\mapsto\Rn G=G_n$ and then by thinning each of the edges of $G_n$ independently with probability $\rho_n^{-1}$ to obtain $\tilde{G}_n=([n],\tilde{E}_n)$:
\begin{equation}\label{eq:Bick-samp}
\mathbb{P}\{ ij\in\tilde{E}_n\mid G_n=([n],E_n)\}=\left\{\begin{array}{cc} \rho_n^{-1},& ij\in E_n,\\ 0,& \text{otherwise}.\end{array}\right.\end{equation}
The resulting finite sample models $\{Q^{(n)}\}_{n\geq1}$, though not consistent in the traditional sense, are related by the sampling scheme in \eqref{eq:Bick-samp}, permitting inference for $h$.
We stress that the above choice is just one of many, and the decision of which to use depends on which most accurately models reality.
We spell out the significance of this decision in Section \ref{section:infer-univ}.

\paragraph{Superstar model}

Bhamidi, et al \cite{BhamidiSteele2014} propose the {\em superstar model} for explaining the structure of networks based on Twitter activity.
The data are generated by retweet activity corresponding to specific events, such as the World Cup.
A {\em retweet} is a rebroadcasting of another user's activity, and in this network an edge between $v,v'\in V$ indicates that one of $v$ and $v'$ retweeted the other's tweet. 

A significant fraction of retweets corresponding to any given event tends to originate from a single individual, while the rest of the activity is spread across users.
Bhamidi, et al address this tendency by specifying a single vertex $v^*$, the {\em superstar}, and parameters $p\in(0,1)$ and $\delta>-1$.
A random network $G$ grows by sequential arrival of a new vertex at each time.
Upon arrival, the $(n+1)$st vertex either connects to $v^*$ with probability $p$ or with probability $1-p$ attaches to one of the other vertices $v$ with conditional probability proportional to $\deg(v)+\delta$, where $\deg(v)$ is the degree of vertex $v$.
These dynamics produce a tree with a single connected component with probability 1, but in general the network generated by retweet activity is neither a tree nor connected.
To fit their model, Bhamidi, et al process the data by first sampling the largest connected component and then removing edges so that the resulting dataset is a tree.

Altogether, the approach describes a family of sampling mechanisms $\{\Sigma_n\}_{n\geq1}$ and finite sample models $\{P^{(n)}\}_{n\geq1}$ but no data generating process and no way to fit the model to network data directly.
The model inflicts some uneasiness, as the sampling mechanism here was chosen for the purpose of forcing the data to fit the model rather than vice versa.
We also point out that the above dynamics describe a network that grows by sequential addition of vertices when, in fact, retweet activity corresponds to a process of interactions among existing Twitter users.
In such a case, it is more appropriate to treat the edges as the units, as we do in the next model.

\paragraph{Edge exchangeable models}

Many network datasets are generated by a process of repeated interactions, as in the actors collaboration, Enron email, and retweet networks.
In all these cases, it is more appropriate to model the data generating process via edge addition, not vertex addition.
Furthermore, it is best to label the edges, instead of vertices, in order to better incorporate the fact that the network data are obtained by sampling the interactions, that is, movies, emails, or retweets.
These points have been discussed in detail throughout \cite{CraneDempsey2016e2}, which introduced and developed the class of edge exchangeable models for network data.

For a specific example of such a model, let $V$ be a countably infinite population.
To each $v\in V$, we assign a positive weight $W_v>0$ so that the ranked reordering $(W_v)_{v\in V}^{\downarrow}$ follows the Poisson--Dirichlet distribution with parameter $(\alpha,\theta)$ for $0<\alpha<1$ and $\theta>-\alpha$; see \cite{Crane2016ESF, FengPDbook} for further discussion of the many ways in which the Poisson--Dirichlet distribution arises.
Given $W$, we then generate a sequence of pairs $\{V_n,V_n'\}_{n\geq1}$, $V_n\neq V'_n$, as a conditionally i.i.d.\ sequence with 
\begin{equation}\label{eq:E2}\mathbb{P}\{\{V_n,V_n'\}= vv'\mid (W_x)_{x\in V}\}\propto W_vW_{v'},\quad v\neq v'\in V.\end{equation}

The sequence of pairs $\{V_n,V'_n\}_{n\geq1}$ determines a graph with labeled edges as in Figure \ref{fig:labeled}(d).
This graph is {\em edge exchangeable}, that is, invariant with respect to relabeling of the edges.
A more general construction of edge exchangeable network models can be described by taking $(W_v)_{v\in V}$ from any distribution on the infinite simplex, but we specialize here to the Poisson--Dirichlet case, which exhibits several relevant properties for Question (II).

\subsection{Sparsity, exchangeability, and projection sampling}\label{section:power law}

The edge exchangeable model given in \eqref{eq:E2}, and more generally in \cite{CraneDempsey2016e2}, exhibit several important properties which together offer a possible answer to Question (II) from Section \ref{section:summary}.
For one, edge exchangeability gives an invariance principle in accord with (A).
For two, this process generates a network with multiple edges, as when actors are cast together in more than one movie or individuals exchange multiple emails, and so the model more accurately reflects the nature of network formation than do models that describe growth by vertex addition.
From a network with multiple edges, we can obtain a network without multiple edges by projecting all multiple edges to a single edge, as in the karate club network \cite{Zachary1977}.
This {\em projection sampling} scheme is used to produce many network datasets, including those in \cite{KlimtYang2004,Zachary1977}.
For three, edge exchangeable models do allow for sparse and power law structure.

\paragraph{Sparsity}
For any graph $G$ (vertex or edge labeled), we write $v(G)$ to denote the number of vertices and $e(G)$ to denote the number of edges in $G$.
For any $G\in\graphsn$, let
\[{\epsilon}(G):=\frac{2e(G)}{v(G)(v(G)-1)}\]
be the density of edges in $G$.

\begin{defn}[Sparse graphs]\label{defn:dense-sparse}
A graph $G\in\graphsN$ is {\em sparse} if 
\[\limsup_{n\rightarrow\infty}\epsilon(G_{|[n]})=0.\]
\end{defn}

\begin{defn}[Sparse network models]\label{defn:sparse model}
A network model $\mathcal{M}\subseteq\P(\graphsN)$ is {\em sparse} if $\mu$-almost every $G\in\graphsN$ is sparse for all $\mu\in\mathcal{M}$.
\end{defn}

Bickel \& Chen's approach from Section \ref{section:specification} seeks to model sparsity by specifying a family of finite sample models $P^{(n)}:\Theta\to\P(\graphsn)$ for which any sequence $(G_n)_{n\geq1}$ of finite graphs with $G_n\sim P^{(n)}_\theta$ has $\epsilon(G_n)\to_P0$ as $n\to\infty$, where $\to_P$ denotes {\em convergence in probability}.
This should not, however, be confused with a sparse network model as in Definition \ref{defn:sparse model}.
The finite sample models $\{P^{(n)}\}_{n\geq1}$ given in \eqref{eq:Bick-samp} are not sampling consistent and, therefore, do not determine a model $P:\Theta\to\P(\graphsN)$ for the population network.

Observation \ref{obs:dense or empty} dashes any hope of a sparse and exchangeable population generating model for countable vertex labeled networks.
The model in \eqref{eq:E2}, however, does give a straightforward generating mechanism which is both exchangeable and produces a sparse network with probability 1 under projection of multiple edges to a single edge.

Let $\Phi=\{(\alpha,\theta): 0<\alpha<1, \theta>-\alpha\}$ be the parameter space of the Poisson--Dirichlet distribution and let $Q:\Phi\to\P(\graphsN)$ be the model described by projecting multiple edges to a single edge in the multigraph generated by \eqref{eq:E2}.
For each $n\geq1$, let $\Rn$ be the canonical sampling mechanism on edge labeled graphs.
By \cite[Theorem 5.4]{CraneDempsey2016e2}, the model $(\Phi,Q,\{\Rn\}_{n\geq1})$ is sparse.

\paragraph{Power law exponent}

With the underlying network $G\in\graphsN$ understood, we write $N_{k,n}$ to denote the number of vertices with degree $k$ in $G_{|[n]}$.

\begin{defn}[Power law]\label{defn:power law}
A graph $G\in\graphsN$ exhibits {\em power law degree distribution with exponent $\gamma>1$} if 
\[ v(G_{|[n]})^{-1}N_{k,n}\rightarrow L(k) k^{-\gamma}\quad\text{as }n\rightarrow\infty\quad\text{for all large }k,\]
where $L(x)$ is a slowly varying function, that is, $\lim_{x \to \infty} L(tx) / L(x) = 1$ for all $t > 0$.  
\end{defn}

Power law distributions have been observed in various network datasets \cite{Abello1998,FFF1999,Kumar1999} and examined in broader scientific applications \cite{ClausetShaliziNewman2009}.
Understanding the power law phenomenon in networks is a matter of great interest and debate, with several authors questioning whether the power law reflects real network structure or is a sampling artifact \cite{AchlioptasClauset2005,KhaninWit2006,LeeKimJeong2006,WillingerAldersonDoyle2009}.

By properties of the Poisson--Dirichlet distribution  \cite[Chapter 3.3]{Pitman2005}, the proportion of vertices with degree $k$ in the multigraph generated by \eqref{eq:E2} with parameter $(\alpha,\theta)$ exhibits power law degree distribution with exponent $\alpha+1$.
Numerical observations also suggest that the power law is preserved upon projection of multiple edges to a single edge.
See the discussion in \cite[Sections 3 and 4]{CraneDempsey2016e2} for further details.


\subsection{Relative exchangeability}\label{section:relative exchangeability}

For this section, we specialize to vertex labeled graphs.

Consider a network model $(\Theta,Q,\{\Rn\}_{n\geq1})$ and suppose $\Theta=\Phi\times\Psi$ decomposes into an exchangeable part $\Phi$ and a structural part $\Psi$ as in Section \ref{section:latent} above.
We assume $\Psi$ consists of countable relational structures, as defined in Appendix \ref{appendix:ultra}, which includes partitions, graphs, and other general structures in common use.
Without any further assumptions, Principle (A) suggests label equivariance of the data generating model $Q:\Theta\rightarrow\P(\graphsN)$.
To further ensure that the observed network data $G_n\in\graphsn$ is sufficient for inference of $\Theta_n=\Theta/\sim_{\Rn}$, we require relative exchangeability as in Definition \ref{defn:relative exchangeability}.
The next theorem characterizes relatively exchangeable network models.

\begin{thm}\label{thm:rel exch}
Suppose $\Theta$ decomposes as $\Phi\times\Psi$, for $\Psi$ satisfying Conditions \ref{assumption:TDC} and \ref{assumption:ultrahomogeneity}.
Let $(\Theta,Q,\{\Rn\}_{n\geq1})$ be an identifiable, relatively exchangeable statistical network model.
Then there exists a function $g:\Theta\times\tilde{\Psi}\times[0,1]^4\to\{0,1\}$ such that for $(\phi,\psi)\in\Phi\times\Psi$, $G\sim Q_{\phi,\psi}$ satisfies $G\equalinlaw G^*=(\Nb,E^*)$ for
\begin{equation}\label{eq:rel exch}
ij\in E^{*}\quad\text{if and only if}\quad g(\phi,\psi|_{[i\vee j]},U_0,U_i,U_j,U_{ij})=1,\quad j>i\geq1,\end{equation}
where $\{U_0, (U_i)_{i\geq1}, (U_{ij})_{j>i\geq1}\}$ are i.i.d.\ Uniform$[0,1]$ random variables and $\tilde{\Psi}=\bigcup_{n\geq1}\Psi/\sim_{\Rn}$ is the set of all equivalence classes of $\Psi$ under $\sim_{\Rn}$.
\end{thm}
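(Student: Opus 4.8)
The plan is to recognize the claimed representation \eqref{eq:rel exch} as a \emph{relative} Aldous--Hoover theorem for symmetric $\{0,1\}$-valued arrays, in which the representing function carries the extra dependence on the exchangeable parameter $\phi$ and on the local structure $\psi|_{[i\vee j]}$. The argument proceeds in three stages: first convert the model-level hypothesis of relative exchangeability into a distributional symmetry of $Q_{\phi,\psi}$ under the automorphism group of $\psi$; then apply the relative Aldous--Hoover representation that is available precisely under Conditions \ref{assumption:TDC} and \ref{assumption:ultrahomogeneity}; finally assemble the per-parameter representing functions into a single measurable $g$.

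First I would fix $(\phi,\psi)\in\Phi\times\Psi$ and unpack Definition \ref{defn:relative exchangeability} with $\Sigma_n=\Rn$. If $\sigma:\Nb\to\Nb$ is an automorphism of $\psi$, then $\sigma\psi=\psi$, so the action $\sigma(\phi,\psi)=(\phi,\sigma\psi)$ gives $\sigma\theta=\theta$ and the defining condition $(\phi,\psi|_{[n]})=(\phi,(\sigma\psi)|_{[n]})$ holds for every $n$. Relative exchangeability then yields $\Rn G^{\sigma}\equalinlaw\Rn G$ for every $n$, and since these finite restrictions are consistent and determine the law on $\graphsN$, the equality of all finite-dimensional restrictions upgrades to $G^{\sigma}\equalinlaw G$. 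Thus $G\sim Q_{\phi,\psi}$ is \emph{$\psi$-exchangeable}: its law is invariant under $\mathrm{Aut}(\psi)$.

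With $\psi$-exchangeability in hand, the second stage invokes a relative version of the Aldous--Hoover theorem \cite{Aldous1981,Hoover1979}. Conditions \ref{assumption:TDC} and \ref{assumption:ultrahomogeneity} are exactly the hypotheses---ultrahomogeneity of $\psi$ together with the requisite amalgamation/disjointness property---under which every $\mathrm{Aut}(\psi)$-invariant symmetric array over $\Nb$ admits a representation
\[
ij\in E\quad\Longleftrightarrow\quad f_{\phi,\psi}(U_0,U_i,U_j,U_{ij})=1,\qquad j>i\geq1,
\]
for i.i.d.\ Uniform$[0,1]$ variables $U_0,(U_i),(U_{ij})$ and a measurable $f_{\phi,\psi}$ whose dependence on the index pair $\{i,j\}$ enters only through the $\mathrm{Aut}(\psi)$-orbit of $(i,j)$. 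Here ultrahomogeneity does the essential work: the orbit of a tuple under $\mathrm{Aut}(\psi)$ coincides with its quantifier-free type, and for a pair $\{i,j\}$ this type is recovered from the finite restriction $\psi|_{[i\vee j]}$, regarded as an element of $\tilde\Psi=\bigcup_{n\geq1}\Psi/\sim_{\Rn}$. Substituting this local description for the orbit converts $f_{\phi,\psi}$ into a function of $\phi$, of $\psi|_{[i\vee j]}$, and of the four uniforms, which is the form \eqref{eq:rel exch}.

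It remains to collect the family $\{f_{\phi,\psi}\}$ into one jointly measurable function $g$ with arguments $\phi$, $\psi|_{[i\vee j]}\in\tilde\Psi$, and the four uniforms, using a measurable-selection argument over $\phi$ and over the (countably generated) space $\tilde\Psi$; identifiability of $(\Theta,Q,\{\Rn\}_{n\geq1})$ guarantees that the chosen representations remain coherent across parameters and that distinct parameters are not collapsed. The main obstacle is the second stage together with the orbit-to-restriction reduction: establishing the relative Aldous--Hoover representation under Conditions \ref{assumption:TDC} and \ref{assumption:ultrahomogeneity}, and verifying that the dependence on $(i,j)$ genuinely factors through $\psi|_{[i\vee j]}$ rather than through global features of $\psi$. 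The reduction in the first stage and the measurable packaging in the last are comparatively routine once the representation theorem is in place.
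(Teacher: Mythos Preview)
Your first stage is sound, but the second stage rests on a misreading of Condition~\ref{assumption:ultrahomogeneity}. That condition does \emph{not} assert that every $\psi\in\Psi$ is ultrahomogeneous; it only guarantees the existence of a single ultrahomogeneous $\psi^*\in\Psi$ into which all other $\psi'\in\Psi$ embed. For a generic $\psi$ the automorphism group $\mathrm{Aut}(\psi)$ can be far too small for any Aldous--Hoover-type argument to apply, and the identity ``orbit of $(i,j)$ under $\mathrm{Aut}(\psi)$ equals quantifier-free type of $(i,j)$'' is precisely the characterization of ultrahomogeneity, so it fails for non-ultrahomogeneous $\psi$. Consequently your per-$\psi$ representation $f_{\phi,\psi}$ need not exist, and stage three has nothing to assemble.

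The paper's route is to work with the single universal $\psi^*$ supplied by Condition~\ref{assumption:ultrahomogeneity}: one first observes that for every $(\phi,\psi)$ there is an injection $\pi:\Nb\to\Nb$ with $G^{\pi}\sim Q_{\phi,\psi}$ whenever $G\sim Q_{\phi,\psi^*}$, and then invokes the relative Aldous--Hoover theorem of \cite[Theorem~3.15]{CraneTowsner2015} once, at $(\phi,\psi^*)$, where ultrahomogeneity and strong amalgamation are genuinely available. The representation for arbitrary $\psi$ is then inherited through the embedding. This also dissolves your third-stage worry: because a single representing function suffices for $\psi^*$ and all other cases are pullbacks along embeddings, no measurable-selection argument over $\Psi$ is needed.
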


\begin{rmk}
In \eqref{eq:rel exch}, $\psi|_{[i\vee j]}$ is the restriction of $\psi$ to its initial segment of units labeled $1,\ldots,i\vee j$.
In some cases, the representation in \eqref{eq:rel exch} can be simplified to only depend on $\psi|_{\{i,j\}}$, the restriction of $\psi$ to the units $i$ and $j$, but whether this is possible depends nontrivially on the structure of $\Psi$; see \cite{CraneTowsner2015}.
(The stochastic blockmodel admits the simpler representation, as we discuss in Example \ref{example:sbm-2}.)
\end{rmk}

\begin{rmk}
Theorem \ref{thm:rel exch} fits well with recent interest in the field of graphon estimation \cite{GaoLuZhou2015,WolfeOlhede2014}.  In Theorem \ref{thm:rel exch}, the function $g$ acts as a generalized graphon for relatively exchangeable models.  We discuss further in Section \ref{section:comm-det}.
\end{rmk}

\begin{example}[Stochastic blockmodel]\label{example:sbm-2}
The stochastic blockmodel from Example \ref{example:SBM} is relatively exchangeable and admits the following representation in terms of \eqref{eq:rel exch}.
In this case, $\Theta=\Phi\times\Psi$ for $\Phi=[0,1]\times[0,1]$ and $\Psi$ consists of all partitions of $\Nb$.
We identify $\Psi/\sim_{\Rn}$ with $\mathcal{P}_{[n]}$, the set of all partitions of $[n]$.
The model satisfies all the conditions of Theorem \ref{thm:rel exch} and can be represented by $g:\Phi\times \tilde{\Psi}\times[0,1]\rightarrow\{0,1\}$, where
\[g((p,q),B|_{[i\vee j]},U_{ij})=\mathbf{1}\{U_{ij}\leq p\}\mathbf{1}\{i\approx_B j\}+\mathbf{1}\{U_{ij}\leq q\}\mathbf{1}\{i\not\approx_B j\}.\]
\end{example}

We acknowledge here that Theorem \ref{thm:rel exch} characterizes a large class of relatively exchangeable network models relevant for practical purposes.
Conditions \ref{assumption:TDC} and \ref{assumption:ultrahomogeneity}, though abstract, are reasonable for most practical purposes.

\section{Inference}\label{section:inference}
Our proposed framework does not favor any inferential method or statistical philosophy over another, but it does affect how estimates can be interpreted and to what extent statistical models are useful for inferences beyond the sample.
Sections \ref{section:infer-univ} and \ref{section:missing} emphasize that valid inference of universal parameters and predictive probabilities relies on an accurate model for the sampling mechanism.
Section \ref{section:comm-det} foreshadows future developments in the realm of generalized graphon estimation.

\subsection{Inferring universal parameters}\label{section:infer-univ}

Estimation of universal parameters from subnetwork data requires complete identifiability as well as knowledge of the sampling mechanism by which the parameters in the population and finite sample models are related.
Ideally, the parameters maintain their meaning under sampling, but Observation \ref{obs:dense or empty} and the preceding discussion demonstrates how this fails in many applications.

Consider the following special case of Bickel \& Chen's model from Section \ref{section:sampling}.
Let $\Theta=[0,1]$ and $P^{(n)}:\Theta\to\P(\graphsn)$ be defined so that, for each $\theta\in[0,1]$, edges are present in $G=([n],E)$ independently with probability
\begin{equation}\label{eq:fidi-Bick}P^{(n)}_{\theta}\{ij\in E\}=\theta/n,\quad 1\leq i<j\leq n.\end{equation}
These finite sample models $\{P^{(n)}\}_{n\geq1}$ are not sampling consistent and, therefore, they do not directly correspond to a data generating model.
There are, however, innumerably many ways to fit this model into our framework.
We discuss a few to highlight the salience of our discussion.

In all cases, we let $\Sigma_n$ be the random sampling scheme in \eqref{eq:Bick-samp} with $\rho_n=n$ and we write $\hat{p}_n=\binom{n}{2}^{-1}e(\tilde{G}_n)$ for the proportion of edges in the sampled data $\tilde{G}_n=([n],\tilde{E})$ with $n$ labeled vertices.
We compute the likelihood by
\[\mathcal{L}(\theta\mid\tilde{G}_n)\propto\prod_{1\leq i<j\leq n}(\theta/n)^{\mathbf{1}\{ij\in \tilde{E}\}}(1-\theta/n)^{1-\mathbf{1}\{ij\in \tilde{E}\}},\quad \tilde{G}_n=([n],\tilde{E})\in\graphsn,\]
from which we obtain the maximum likelihood estimator $\hat{\theta}_{MLE}^{(n)}=n\hat{p}_n\wedge 1$.

Note that this is the same estimator we would obtain if we only assume the finite sample models $\{P^{(n)}\}_{n\geq1}$ in \eqref{eq:fidi-Bick} and proceed by maximum likelihood estimation.
A key difference, however, is that our estimator $\hat{\theta}^{(n)}_{MLE}$ is logically connected to the population parameter $\theta$, while the same estimate obtained from the inconsistent collection of finite sample models is not.
The estimators from $\{P^{(n)}\}_{n\geq1}$, without the corresponding data generating model and sampling mechanisms, though symbolically identical to $\hat{\theta}^{(n)}_{MLE}$ for each $n\geq1$, establish no identical since there is no connection between the parameters `$\theta$' for different sample sizes.

This is not semantics.  
Consider the same finite sample models $\{P^{(n)}\}_{n\geq1}$ above but now suppose that $Q:\Theta\to\P(\graphsN)$ defines $Q_{\theta}$ as the Erd\H{o}s--R\'enyi model with parameter $\theta/(2-\theta)$.
Notice that $x\mapsto x/(2-x)$ is a bijection of $[0,1]$ so that $Q\Theta$ is the same set of distributions, and thus determines the same model as above, but with a different interpretation given to $\Theta$.
Under the sampling scheme in \eqref{eq:Bick-samp} with $\rho_n=n$, $Q$ induces the same finite sample models as $\{P^{(n)}\Theta\}_{n\geq1}$, but a different interpretation for $\theta$.
We settle on the estimator $\tilde{\theta}^{(n)}=(n\hat{p}_n\wedge1)/(1+n\hat{p}_n\wedge1)$, which differs from the maximum likelihood estimator of $\hat{\theta}^{(n)}_{MLE}=n\hat{p}_n\wedge1$ when the finite sample models $\{P^{(n)}\}_{n\geq1}$ are considered in isolation.
Notice that $\tilde{\theta}^{(n)}\to_P\theta$ whereas $\hat{\theta}^{(n)}_{MLE}=n\hat{p}_n\to_P\theta/(2-\theta)$.
Thus, even in this simple setting, the finite sample models are correct, but without the correct logical connection to the data generating process, via the sampling mechanism, the parameter assumes a different meaning in the finite sample models and population model.

The above example applies to any continuous bijection $f:[0,1]\to[0,1]$: if $Q:\Theta\to\P(\graphsN)$ defines $Q_{\theta}$ as the Erd\H{o}s--R\'enyi model with parameter $f(\theta)$ and $Q^{(n)}$ are the finite sample models induced by sampling as in \eqref{eq:Bick-samp}, then $\tilde{\theta}^{(n)}=f^{-1}(n\hat{p}_n\wedge1)$ is a consistent estimator of $\theta$ and $\theta^{(n)}_{MLE}=n\hat{p}_n\wedge1$ is not.
Though perhaps obvious how to resolve the issue in this simple case, it is generally quite difficult for more sophisticated models, such as the exponential random graph model \cite{RinaldoShalizi2013}.

\subsection{Missing link prediction}\label{section:missing}
For known $p\in(0,1)$, let $G$ be modeled by the Erd\H{o}s--R\'enyi distribution with parameter $p$ on graphs with 3 vertices.
We consider the predictive probability of an edge between vertices labeled 1 and 3 in the event that two edges $\{1,2\}$ and $\{2,3\}$ were sampled.
We stress that, in general, the observation $(\{1,2\},\{2,3\})$ conveys different information than $(\{1,2\},\{1,3\})$ because the labels assigned to vertices during sampling need not be exchangeable.
The following analysis, though carried out in the simple case of 3 vertices, illustrates the impact of sampling scheme on link prediction.
\begin{itemize}
	\item {\em Vertex sampling}.  We assume the 3 vertices are labeled uniformly without replacement and we are given the information that edges $\{1,2\}$ and $\{2,3\}$ are present, but no information as to the presence or absence of $\{1,3\}$ in the population network.  Since $p$ is known and all edges behave independently, the predictive probability that edge $\{1,3\}$ is present is $p$.
	\item {\em Edge sampling}.  We assume 2 edges are sampled uniformly with replacement among the edges in the underlying graph.
	The vertices $v_1$ and $v_2$ in the first sampled edge are labeled uniformly without replacement in $\{1,2\}$ and the unsampled vertex is assigned label 3.  After 2 edges are sampled, the possible observations are $(\{1,2\},\{1,2\})$, $(\{1,2\},\{1,3\})$, and $(\{1,2\},\{2,3\})$.  Given observation $(\{1,2\},\{2,3\})$, the edge $\{1,3\}$ is present in the underlying graph with probability $4p/(9-5p)$.
	\item {\em Snowball sampling}.  Under snowball sampling, we start with a vertex $v_1$ chosen uniformly among the three vertices.
	We assign this vertex the label 1 and then sample outwardly by choosing $v_2$ uniformly among the vertices adjacent to $v_1$.
	We assign label $2$ to $v_2$ and then sample outwardly from $v_2$ to one of its neighbors $v_3\neq v_1$ and assign label 3 to $v_3$.
	If, at any point, there are no vertices to choose from, we choose the next vertex uniformly among the unsampled vertices.
	Given $(\{1,2\},\{2,3\})$, the probability of the edge $\{1,3\}$ is $p/(2-p)$.
	\item {\em Bickel \& Chen's model}.  Under Bickel \& Chen's approach, we observe a thinned version of the Erd\H{o}s--R\'enyi graph with parameter $p$, which maintains each edge independently with probability $1/3$.  In this case, the predictive probability that $\{1,3\}$ is present, given we observe $(\{1,3\},\{2,3\})$ and no edge $\{1,3\}$, is $2p/(3-p)$.	
	\end{itemize}

\subsection{Community detection}\label{section:comm-det}

The characterization of relatively exchangeable models in Theorem \ref{thm:rel exch} suggests a general approach to community detection with ties to recent trends in nonparametric graphon estimation.
In a nonparametric setting, we assume the parameter space $\Theta=\Psi$, where $\Psi$ is the set of all partitions of $\Nb$.
Given $\psi\in\Psi$, we model the network data $G^*=(\Nb,E^*)$ by 
\[\mathbb{P}\{ij\in E^*\mid(U_k)_{k\geq1}\}= g(\mathbf{1}\{i\approx_{\psi} j\},U_i,U_j)\]
conditionally independently for all $j>i\geq1$, where $(U_k)_{k\geq1}$ are i.i.d.\ Uniform$[0,1]$ random variables, $\mathbf{1}\{i\approx_{\psi} j\}$ is the indicator of the event that $i$ and $j$ are in the same block of $\psi$, and $g:\{0,1\}\times[0,1]^2\to[0,1]$ is symmetric in its last two arguments.
This setup generalizes the stochastic blockmodel in Example \ref{example:SBM}, which corresponds to 
\[g(\mathbf{1}\{i\approx_{\psi} j\},U_i,U_j)=\left\{\begin{array}{cc} p,& i\approx_{\psi} j,\\ q,& \text{otherwise.}\end{array}\right.\]

An even more general setting is possible based on Theorem \ref{thm:rel exch} in which the first argument of $g$ depends on the restriction $\psi|_{[i\vee j]}$, that is, the entire partition $\psi$ induces on $1,\ldots,i\vee j$.
Given the current interest in graphon estimation, the setup here seems a natural class of nonparametric models for community detection.

\section{Concluding remarks}\label{section:concluding remarks}

The preceding pages lay a foundation for the development of sound statistical theory and methods for network data.
Though formal in spots, the conversation is rooted in practical concerns about network modeling.
Given the technical nature of the discussion, we conclude with some parting shots directed toward applied statisticians who work with network data.

The preceding discussion demonstrates the dangers inherent in the standard protocol for analyzing network data.
The logical fallacy of modeling data with inconsistent finite sample distributions is well understood by statisticians, and yet the practice endures throughout the statistics literature on networks.
The wide acceptance of this otherwise unacceptable practice is a triumph of pragmatism over principle.
Absent the foregoing framework, the analyst must choose between throwing up his hands and doing nothing or performing an analysis which, though not iron clad, provides some useful insights.

Our primary observations, summarized in points (M1)-(M4), highlight several important facets of network modeling that have otherwise gone unnoticed or unspoken.
Most importantly, our suggested framework calls for explicit models for both the data generating process and the sampling mechanism.
We stress that both of these components correspond to a physical process and, therefore, the choice of each should reflect the analyst's best knowledge about the real world.
To wit, neither $Q:\Theta\to\P(\graphsN)$ nor $\{\Sigma_n\}_{n\geq1}$ should be chosen solely because inference is tractable or computationally efficient under a given selection.
In light of concerns over the incompatibility between common invariance principles and empirical properties, we expand the suite of viable network models to include both edge exchangeable, relationally exchangeable, and relatively exchangeable data generating processes.

The pragmatist will undoubtedly raise the concern that a model specified this way does not generally yield closed form expressions for the finite sample models.
One may inquire, however, as to the benefit of closed form finite sample models that are incompletely specified and ineffectual for out-of-sample inference.
For example, it is not clear how to recover the finite sample models of the exponential random graph model (ERGM) from an exchangeable data generating model and a reasonable sampling scheme.
But if the finite sample models do not correspond to some realistic data generating model and a reasonable sampling scheme, then under what circumstances is it logically valid or defensible to model network data with the ERGM?
The discussion of Sections \ref{section:infer-univ} and \ref{section:missing} demonstrates that these concerns are in no way specific to the ERGM.  We caution against the general practice of fitting network data to a poorly understood model on the grounds of practical expediency.
Ultimately, the goal of sensible inference can only be achieved if the chosen model accurately depicts reality.

\appendix{\section{Conditions for main theorems}\label{appendix:ultra}}

Theorem \ref{thm:main} establishes that every label equivariant model for network data entails an exchangeable, identifiable data generating process and a sampling mechanism.
To ensure identifiability we require Condition \ref{condition:uncountable}.

Theorem \ref{thm:rel exch} characterizes relatively exchangeable network models for the case when the parameter space consists of relational structures that satisfy Conditions \ref{assumption:TDC} and \ref{assumption:ultrahomogeneity} below.
Certain ideas from Theorem \ref{thm:rel exch} enter into our proof of Theorem \ref{thm:main}.

\subsection{Identifiability}\label{condition:uncountable}
The parameter space $\Theta$ decomposes as $\Phi\times\Psi$ for an exchangeable part $\Phi$ which maps injectively into $(0,1)$ and a combinatorial part $\Psi$ which corresponds to some class of countable relational structures with finite signature.  
In general $\Psi$ consists of structures $\psi=(\Nb; R_1^{\psi},\ldots,R_r^{\psi})$ such that $R_j^{\psi}\subseteq\Nb^{i_j}$ for some $i_j\geq1$, for each $j=1,\ldots,r$.
Such structures include graphs and partitions, with $\psi=(\Nb; R_1^{\psi})$ having $R_1^{\psi}\subseteq\Nb\times\Nb$ in each case.
See \cite{CraneTowsner2015} for specifics.

\subsection{Strong amalgamation property}\label{assumption:TDC}
We require that every $\psi\in\Psi$ has the {\em strong amalgamation property}.
Regarding $\psi\in\Psi$ as a combinatorial structure labeled by $\Nb$, we write $\psi|_{S}$ to denote the restriction of $\psi$ to the structure labeled by $S\subset\Nb$. 
The strong amalgamation property says that if $\psi|_{S}$ embeds into two structures $\psi_1$ and $\psi_2$ in $\Psi$, then there is a common structure $\psi^*\in\Psi$ into which both $\psi_1$ and $\psi_2$ embed such that the only elements of $\psi_1$ and $\psi_2$ that are identified are those which are already identified by the embeddings of $\psi_{|S}$ into $\psi_1$ and $\psi_2$, respectively.
See \cite[Section 6.4]{HodgesModelTheory} for further details.
Most structures found in practice satisfy the strong amalgamation property, including partitions and graphs.

\subsection{Ultrahomogeneity}\label{assumption:ultrahomogeneity}

A combinatorial structure $\psi\in\Psi$ is {\em ultrahomogeneous} if for every embedding of a finite structure into $\psi$ extends to an automorphism of $\psi$. 
Ultrahomogeneity of $\psi\in\Psi$ ensures that we can define a relatively exchangeable model $Q:\Phi\times\Psi\to\P(\graphsN)$ in keeping with the lack of interference condition.
We assume $\Psi$ contains an ultrahomogeneous $\psi\in\Psi$ such that every $\psi'\in\Psi$ embeds into $\psi$.

\section{Proofs of theorems}\label{appendix:proofs}

\subsection{Proof of Observation \ref{obs:dense or empty}}
The empty graph is clearly exchangeable, so we focus on the case when $G=(\Nb,E)$ is exchangeable but not almost surely empty.
We need to show that the limiting edge density $\epsilon(G)$ is strictly positive with probability 1.
For each $n\geq1$, let $X_n:=\epsilon(G_{|[n]})$ be the edge density of $G_{|[n]}$.
By exchangeability,
\[\mathbb{E}(X_n\mid X_{n+1})=X_{n+1}\quad\text{for all }n\geq1,\]
so that $(X_n)_{n\geq1}$ is a reverse martingale and has an almost sure limit $X_{\infty}$.

If $X_{\infty}>0$, then $G$ is dense by definition.
If $X_{\infty}=0$, the bounded convergence theorem and exchangeability imply
\[\mathbb{E}(\lim_{n\rightarrow\infty}X_n)=\lim_{n\rightarrow\infty}\frac{2}{n(n-1)}\sum_{1\leq i<j\leq n}\mathbb{E}(\mathbf{1}\{ij\in E\})=0,\]
so that $G$ is empty with probability 1.

\subsection{Proof of Theorem \ref{thm:main}}

We call a graph $G\in\graphsN$ {\em universal} if for every finite subgraph $G'\in\graphsn$ there exists $S\subset\Nb$ with $|S|=n$ such that $G_{|S}\cong G'$.
In other words, every finite subgraph is embedded in $G$.

\begin{lemma}\label{lemma:Rado}
Let $\mu_{\theta}$ denote the Erd\H{o}s--R\'enyi measure with parameter $0<\theta<1$ on the space $\graphsN$ with vertices labeled in $\Nb$.
For $0<\theta<1$, $G\sim\mu_{\theta}$ is universal and ultrahomogeneous with probability 1.
\end{lemma}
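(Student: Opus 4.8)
The plan is to reduce both properties to the single \emph{extension property} that characterizes the Rado graph: for every pair of disjoint finite sets $U, W \subset \Nb$ there exists a vertex $z \in \Nb \setminus (U \cup W)$ adjacent to every vertex of $U$ and to no vertex of $W$. Once this property is shown to hold almost surely, universality and ultrahomogeneity both follow by standard combinatorial arguments (indeed the almost sure isomorphism type of $G$ is then uniquely pinned down, namely the Rado graph).

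First I would establish the extension property. Fix disjoint finite $U, W \subset \Nb$ and enumerate the remaining vertices $z_1, z_2, \ldots$. Under $\mu_{\theta}$ the edges are independent, each present with probability $\theta$, so each $z_k$ is adjacent to all of $U$ and to none of $W$ independently with probability $p := \theta^{|U|}(1-\theta)^{|W|}$, which is strictly positive precisely because $0 < \theta < 1$. Hence the probability that none of $z_1, \ldots, z_m$ witnesses the required pattern is $(1-p)^m \to 0$ as $m \to \infty$, so almost surely some $z_k$ does. Since there are only countably many pairs $(U,W)$ of disjoint finite sets, a countable intersection of probability-one events shows that, with probability $1$, the extension property holds simultaneously for all such pairs. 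I would fix such a realization of $G$ for the remainder.

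Given the extension property, universality follows by a greedy embedding: to realize a finite graph $G' \in \graphsn$ with vertices $w_1, \ldots, w_n$, I would map $w_1, \ldots, w_n$ into $G$ one at a time, at stage $k$ applying the extension property with $U$ the images of the already-placed neighbors of $w_k$ and $W$ the images of the already-placed non-neighbors, thereby choosing an image of $w_k$ with exactly the correct adjacencies; the resulting image set $S$ satisfies $G_{|S} \cong G'$. Ultrahomogeneity follows by the back-and-forth method: given an isomorphism between two finite induced subgraphs, I would extend it to an automorphism of $G$ by alternately forcing the next vertex of $\Nb$ into the domain and into the range, at each step invoking the extension property to supply an image (respectively, preimage) with the adjacencies dictated by the partial isomorphism built so far.

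The substantive content is entirely in the extension property, whose proof is the short independence computation above. I expect the main bookkeeping obstacle to lie in the back-and-forth construction, where one must interleave the two directions so that every vertex of $\Nb$ eventually enters both the domain and the range, guaranteeing that the limiting map is a bijection (hence an automorphism) rather than merely an embedding. This step is routine but requires maintaining, at each finite stage, a partial isomorphism whose domain and range are finite subsets of $\Nb$ and whose extension is furnished by the already-established extension property.
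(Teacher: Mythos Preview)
Your proposal is correct and rests on the same core computation as the paper's proof: independence of edges plus Borel--Cantelli yields the extension property, from which both conclusions follow. The organization differs slightly. The paper handles universality directly by partitioning $\Nb$ into consecutive blocks of size $m$ and applying Borel--Cantelli to the independent events that a given $F\in\graphsm$ appears on each block; it then establishes the one-step extension (your extension property) separately for ultrahomogeneity. You instead prove the extension property once and derive both universality and ultrahomogeneity from it via greedy embedding and back-and-forth. Your route is the standard model-theoretic packaging (identifying the almost sure isomorphism type as the Rado graph) and is arguably cleaner; the paper's block argument for universality is marginally more self-contained but requires a second Borel--Cantelli pass for the extension step anyway. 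Both are equally elementary.
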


\begin{proof}
For $n\geq1$, let $\mu_{\theta}^{(n)}=\mu_{\theta}\Rninv$ be the measure induced on $\graphsn$.
For all $0<\theta<1$ and all $F\in\graphsn$, $\mu_{\theta}^{(n)}(F)>0$.
For $G\sim\mu_{\theta}$ and $F\in\graphsm$, let $E_k=\{G_{|\{mk+1,\ldots,m(k+1)\}}\cong F\}$ be the event that the subgraph $G_{|\{mk+1,\ldots,m(k+1)\}}$ coincides with $F$, for $k=0,1,\ldots$.
In the Erd\H{o}s--R\'enyi model, all edges are present or absent independently with probability $\theta$, implying $\mu_{\theta}(E_k)=\mu_{\theta}^{(m)}(F)>0$ for every $k\geq0$, $\{E_k\}_{k\geq0}$ are independent, and $\sum_{k\geq0}\mu_{\theta}(E_k)=\infty$.
The second Borel--Cantelli lemma implies that there are infinitely many copies of $F$ in $G$ with probability 1.
Since the set $\bigcup_{n\geq1}\graphsn$ of finite subgraphs is countable, it follows that every finite subgraph occurs in $G$ with probability 1 and $G\sim\mu_{\theta}$ is universal almost surely.

Ultrahomogeneity follows by extending the above argument.
Writing $G_{ij}=\mathbf{1}\{ij\in E\}$ to indicate $ij\in E$ for $G=(\Nb, E)$, we suppose that $(G_{s_is_j})_{1\leq i,j\leq m}=(F_{ij})_{1\leq i,j\leq m}$ for some ordered subset $(s_1,\ldots,s_m)$ of distinct labels.
We extend $(s_1,\ldots,s_m)$ beginning at $s^*=1+\max(s_1,\ldots,s_m)$ and choosing $s_{m+1}$ to be the smallest integer such that $(G_{s_is_j})_{1\leq i,j\leq m+1}=(F_{ij})_{1\leq i,j\leq m+1}$.
This event requires only that the finite sequences $(G_{s_1s_{m+1}},\ldots,G_{s_ms_{m+1}})$ and $(F_{1,m+1},\ldots,F_{m,m+1})$ coincide.
For each choice of $s_{m+1}$, this happens independently with probability at least $\min\{\theta^n,(1-\theta)^n\}>0$.
Borel--Cantelli again implies that there is such an $s_{m+1}$ with probability 1.
\end{proof}

\begin{lemma}
Let $\mu_{\alpha}$ be the edge exchangeable probability measure driven by the Poisson--Dirichlet distribution with parameter $(\alpha,1)$, $0<\alpha<1$, on the space $\graphsN$ with edges labeled in $\Nb$ as in Sections \ref{section:specification} and \ref{section:power law}.
For $0<\alpha<1$, $G\sim\mu_{\alpha}$ is universal and ultrahomogeneous with probability 1.
\end{lemma}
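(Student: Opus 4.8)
The plan is to mirror the proof of Lemma~\ref{lemma:Rado}, but to account for the fact that in the edge exchangeable model the edges $\{V_n,V_n'\}_{n\geq1}$ are only \emph{conditionally} i.i.d.\ given the weight sequence $W=(W_v)_{v\in V}$, and are marginally dependent through $W$. Accordingly, I would condition on $W$ throughout, apply a conditional form of the second Borel--Cantelli lemma, and integrate over $W$ at the end. The first step is to record the features of $W\sim\PD(\alpha,1)$ that drive the argument: almost surely every weight is strictly positive, infinitely many distinct vertices carry positive weight, and the total mass $\sum_{v\neq v'}W_vW_{v'}=1-\sum_v W_v^2$ is positive and finite, so that given $W$ each edge lands on any prescribed unordered pair $\{v,v'\}$ of distinct positive-weight vertices with strictly positive conditional probability $\propto W_vW_{v'}$.

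For universality, fix a finite edge-labeled graph $F$ on edge labels $[m]$ and consider the blocks $B_k=\{mk+1,\ldots,m(k+1)\}$ of consecutive edge labels, $k\geq0$. Because the edges are conditionally i.i.d.\ given $W$, the events $E_k=\{G_{|B_k}\cong F\}$ are conditionally independent, each with the same conditional probability $p_F(W)$, which is strictly positive almost surely: one realizes $F$ by pinning its distinct vertices to specific positive-weight vertices of $W$ and multiplying the corresponding positive probabilities. Hence $\sum_k\mathbb{P}(E_k\mid W)=\infty$ almost surely, and the conditional second Borel--Cantelli lemma gives $\mathbb{P}(\limsup_k E_k\mid W)=1$; integrating over $W$ shows that $F$ occurs infinitely often with probability $1$. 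Since $\bigcup_{n\geq1}\graphsn$ is countable, a union bound over all finite edge-labeled graphs yields universality almost surely.

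For ultrahomogeneity I would establish the one-point extension property—adjoining a single new edge label—and then conclude by the standard back-and-forth, exactly as in the final paragraph of Lemma~\ref{lemma:Rado}. Suppose edge labels $s_1<\cdots<s_m$ realize a finite substructure, and consider any one-point extension obtained by adjoining one new edge connecting two existing vertices, an existing vertex to a fresh one, or two fresh vertices. Conditioning on $W$, I would search over candidate labels $s_{m+1}>\max(s_1,\ldots,s_m)$: the event that edge $s_{m+1}$ attaches to the prescribed pair (choosing, when needed, specific fresh positive-weight vertices, of which infinitely many are available) has positive conditional probability, and these events are conditionally independent across distinct candidate labels. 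Borel--Cantelli again produces such an $s_{m+1}$ with probability $1$, and a union over the countably many finite substructures and their finitely many one-point extensions completes the argument.

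The main obstacle, relative to the Erd\H{o}s--R\'enyi case, is precisely the loss of \emph{marginal} independence: the block independence and per-edge positivity that drive Lemma~\ref{lemma:Rado} hold here only after conditioning on $W$, so care is needed to verify that $p_F(W)>0$ almost surely and that the relevant extension probabilities stay positive. Both reduce to the almost sure positivity and infinite support of the Poisson--Dirichlet weights. Once the conditional Borel--Cantelli steps are in place, the combinatorial bookkeeping—matching patterns up to isomorphism and distinguishing fresh from existing vertices in the extensions—is routine.
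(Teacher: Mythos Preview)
Your proposal is correct and follows the same approach as the paper, which simply states that the proof is identical \emph{mutatis mutandis} to Lemma~\ref{lemma:Rado} once one observes that $\mu_{\alpha}\Rninv(F)>0$ for every finite edge-labeled graph $F$. The conditioning on $W$ that you make explicit is precisely the ``mutation'' hidden in the paper's one-line proof: since the edges are only conditionally i.i.d.\ given $W$, the block events $E_k$ are not marginally independent, and the second Borel--Cantelli step must be carried out conditionally on $W$ (using almost-sure positivity and infinite support of the Poisson--Dirichlet weights) before integrating out---a point the paper leaves to the reader.
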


\begin{proof}
The proof is identical {\em mutatis mutandis} to that of Lemma \ref{lemma:Rado} upon realizing that $\mu_{\alpha}\Rninv(F)>0$ for every finite edge labeled graph $F\in\graphsn$, where $\mu_{\alpha}\Rninv$ is the measure $\mu_{\alpha}$ induces on graphs with edges labeled $1,\ldots,n$ by canonical sampling.
\end{proof}
\begin{proof}[Proof of Theorem \ref{thm:main}]
For definiteness, we can take $\mu$ to be the Erd\H{o}s--R\'enyi measure with success probability $p\in(0,1)$.
By Lemma \ref{lemma:Rado}, $\mu$-almost every $G\in\graphsN$ is universal and ultrahomogeneous.
Any probability distribution $\mu_n$ on $\graphsn$ induces distribution $\mu_{n,m}$ on $\graphsm$, $m\leq n$, by subsampling, $\mu_{m,n}:=\mu_n\Rmninv$.
We define $\Sigma_n:\graphsN\rightarrow\graphsn$ by the following random sampling mechanism.

Beginning with $s_1=1$ and the unique graph on one vertex $\Gamma_1=G_{|[1]}$, we build $(\Gamma_1,\ldots,\Gamma_n)$ sequentially so that $\Gamma_m\sim\mu_{n,m}$ for every $m=1,\ldots,n$.  In particular, $\Gamma_n\sim\mu_n$ as desired.
At stage $m$, suppose we have $\Gamma_m=G_{|\{s_1,\ldots,s_m\}}$.
We sample $s_{m+1}>s_m$ such that $\Gamma=G_{|\{s_1,\ldots,s_{m+1}\}}$ has distribution $\mu_{n,m+1}$ by drawing $G^*\in\mathcal{G}_{[m+1]}$ according to
\[\mathbb{P}\{G^*=F\}=\left\{\begin{array}{cc}
\frac{\mu_{n,m+1}(F)}{\mu_{n,m}(G_{|\{s_1,\ldots,s_m\}})},& F_{|[m]}=G_{|\{s_1,\ldots,s_m\}},\\
0,& \text{otherwise,}
\end{array}\right.\]
for each $F\in\mathcal{G}_{[m+1]}$.
We then choose $s_{m+1}>s_m$ to be the smallest value such that $G_{\{s_1,\ldots,s_{m+1}\}}=F$.
Existence of such an $s_{m+1}$ follows by Lemma \ref{lemma:Rado}.
The fact that $\Gamma_{m+1}\sim\mu_{n,m+1}$ follows by exchangeability of the Erd\H{o}s--R\'enyi process.

We complete the proof by letting $t:\Theta\to(0,1)$ be any injection as guaranteed by Condition \ref{condition:uncountable}.
We then define $Q:\Theta\rightarrow\P(\graphsN)$ by taking $Q_{\theta}$ to be the Erd\H{o}s--R\'enyi distribution with parameter $t(\theta)$ for each $\theta\in\Theta$ and defining $\Sigma_n:\graphsN\rightarrow\graphsn$ to be the sampling mechanism defined implicitly above.

The above argument follows through if we instead work with edge labeled graphs and sample edges instead of vertices.
This completes the proof.
\end{proof}

\begin{rmk}
Our proof of Theorem \ref{thm:main} does not imply that $Q:\Theta\rightarrow\P(\graphsN)$ must correspond to the Erd\H{o}s--R\'enyi or Poisson--Dirichlet model.
For a given collection of finite sample models $\{P^{(n)}:\Theta\rightarrow\P(\graphsn)\}_{n\geq1}$, there may be infinitely many choices of data generating model $Q:\Theta\rightarrow\P(\graphsN)$ and sampling mechanism $\{\Sigma_n\}_{n\geq1}$.
Finding the appropriate combination is the art of statistical modeling.
\end{rmk}

\subsection{Proof of Theorem \ref{thm:rel exch}}

Under Conditions \ref{condition:uncountable} and \ref{assumption:TDC}, there exists an ultrahomogeneous $\psi^*\in\Psi$ such that every $(\phi,\psi)$ embeds into $(\phi,\psi^*)$ in the sense that there is an injection $\pi:\Nb\to\Nb$ such that $G\sim Q_{\phi,\psi^*}$ implies $G^{\pi}\sim Q_{\phi,\psi}$, where $G^{\pi}=(\Nb, E^{\pi})$ is defined by
\[(i,j)\in E^{\pi}\quad\text{if and only if}\quad(\pi(i),\pi(j))\in E.\]
The proof is completed by noticing that the conditions of \cite[Theorem 3.15]{CraneTowsner2015} apply to $(\phi,\psi^*)$, giving the representation of every distribution in $Q$ by \eqref{eq:rel exch}.

\bibliography{network-refs}
\bibliographystyle{abbrv}

\end{document}